\documentclass[12pt]{amsart}
\usepackage{a4wide}
\usepackage[T1]{fontenc}
\usepackage{amssymb,amsmath,amsthm,latexsym}
\usepackage{mathrsfs}
\usepackage[usenames,dvipsnames]{color}
\usepackage{euscript}
\usepackage{graphicx}
\usepackage{mdwlist}
\usepackage{enumerate}
\usepackage{mathtools,dsfont,wasysym}
\usepackage{stmaryrd}
\usepackage{hyperref}
\hypersetup{colorlinks=true,  linkcolor=blue, citecolor=red, urlcolor=cyan}

\newtheorem{theorem}{Theorem}[section]
\newtheorem{lemma}[theorem]{Lemma}
\newtheorem{corollary}[theorem]{Corollary}

\newtheorem{fact}[theorem]{Fact}
\newtheorem{claim}[theorem]{Claim}
\newtheorem{problem}[theorem]{Problem}
\newcounter{nstep}
\newtheorem{step}[nstep]{Step}
\newcounter{maintheorem}

\newtheorem{mainth}[maintheorem]{Theorem}
\theoremstyle{remark}
\newtheorem{remark}[theorem]{Remark}
\theoremstyle{definition}

\numberwithin{equation}{section}
\makeatother

\newcommand{\R}{\mathbb{R}}
\newcommand{\N}{\mathbb{N}}
\newcommand{\e}{\varepsilon}
\newcommand{\ro}{\varrho}
\newcommand{\p}{\varphi}
\newcommand{\n}{\left\Vert\cdot\right\Vert}
\newcommand{\nn}[1]{{\left\vert\kern-0.25ex\left\vert\kern-0.25ex\left\vert #1 \right\vert\kern-0.25ex \right\vert\kern-0.25ex \right\vert}}
\newcommand{\ceil}[1]{\left\lceil #1 \right\rceil}

\renewcommand{\leq}{\leqslant}
\renewcommand{\geq}{\geqslant}
\DeclareMathOperator{\supp}{supp}
\DeclareMathOperator{\diam}{diam}
\DeclareMathOperator{\dist}{dist}
\DeclareMathOperator{\spn}{span}

\newcounter{smallromans}

\newenvironment{romanenumerate}
{\begin{list}{{\normalfont\textrm{(\roman{smallromans})}}}%
  {\usecounter{smallromans}\setlength{\itemindent}{0cm}%
   \setlength{\leftmargin}{5.5ex}\setlength{\labelwidth}{5.5ex}%
   \setlength{\topsep}{.5ex}\setlength{\partopsep}{.5ex}%
   \setlength{\itemsep}{0.1ex}}}%
{\end{list}}

\newcommand{\X}{\mathcal{X}}
\newcommand{\Y}{\mathcal{Y}}
\newcommand{\U}{\mathcal{U}}
\newcommand{\V}{\mathcal{V}}
\newcommand{\uS}{\mathcal{S}}
\newcommand{\B}{\mathcal{B}}
\newcommand{\Bo}{\B_\Y^{\mathsf{o}}}
\newcommand{\D}{\mathcal{D}}
\renewcommand{\P}{\mathcal{P}}
\newcommand{\Fn}{{\mathcal{F}^n}}
\newcommand{\F}{{\mathcal{F}^{<\omega}}}
\renewcommand\qedsymbol{$\blacksquare$} 

\makeatletter
\setcounter{tocdepth}{3}

\renewcommand{\tocsection}[3]{%
	\indentlabel{\@ifnotempty{#2}{\bfseries\ignorespaces#1 #2\quad}}\bfseries#3}
\renewcommand{\tocsubsection}[3]{%
	\indentlabel{\@ifnotempty{#2}{\ignorespaces#1 #2\quad}}#3}

\newcommand\@dotsep{4.5}
\def\@tocline#1#2#3#4#5#6#7{\relax
	\ifnum #1>\c@tocdepth 
	\else
	\par \addpenalty\@secpenalty\addvspace{#2}%
	\begingroup \hyphenpenalty\@M
	\@ifempty{#4}{%
		\@tempdima\csname r@tocindent\number#1\endcsname\relax
	}{%
		\@tempdima#4\relax
	}%
	\parindent\z@ \leftskip#3\relax \advance\leftskip\@tempdima\relax
	\rightskip\@pnumwidth plus1em \parfillskip-\@pnumwidth
	#5\leavevmode\hskip-\@tempdima{#6}\nobreak
	\leaders\hbox{$\m@th\mkern \@dotsep mu\hbox{.}\mkern \@dotsep mu$}\hfill
	\nobreak
	\hbox to\@pnumwidth{\@tocpagenum{\ifnum#1=1\bfseries\fi#7}}\par
	\nobreak
	\endgroup
	\fi}
\AtBeginDocument{%
	\expandafter\renewcommand\csname r@tocindent0\endcsname{0pt}
}
\def\l@subsection{\@tocline{2}{0pt}{2.5pc}{5pc}{}}
\makeatother

\begin{document}
\title[Smooth and polyhedral norms via fundamental biorthogonal systems]{Smooth and polyhedral norms via\\ fundamental biorthogonal systems}

\author[S.~Dantas]{Sheldon Dantas}
\address[S.~Dantas]{Departament de Matem\`atiques and Institut Universitari de Matem\`atiques i Aplicacions de Castell\'o (IMAC), Universitat Jaume I, Campus del Riu Sec. s/n, 12071 Castell\'o, Spain \newline
\href{https://orcid.org/0000-0001-8117-3760}{ORCID: \texttt{0000-0001-8117-3760}}}
\email{\texttt{dantas@uji.es}}

\author[P.~H\'ajek]{Petr H\'ajek}
\address[P.~H\'ajek]{Department of Mathematics\\Faculty of Electrical Engineering\\Czech Technical University in Prague\\Technick\'a 2, 166 27 Prague 6\\ Czech Republic}
\email{hajek@math.cas.cz}

\author[T.~Russo]{Tommaso Russo}
\address[T.~Russo]{Institute of Mathematics\\ Czech Academy of Sciences\\ \v{Z}itn\'a 25, 115 67 Prague 1\\ Czech Republic\\ and Department of Mathematics\\Faculty of Electrical Engineering\\Czech Technical University in Prague\\Technick\'a 2, 166 27 Prague 6\\ Czech Republic \newline
\href{https://orcid.org/0000-0003-3940-2771}{ORCID: \texttt{0000-0003-3940-2771}}}
\email{russo@math.cas.cz, russotom@fel.cvut.cz}

\thanks{Research of S.~Dantas was supported by the Spanish AEI Project PID2019 - 106529GB - I00 / AEI / 10.13039/501100011033 and also by Spanish AEI Project PID2021-122126NB-C33 / MCIN / AEI / 10.13039/501100011033 (FEDER).\\
P.~H\'ajek was supported in part by OPVVV CAAS CZ.02.1.01/0.0/0.0/16$\_$019/0000778.\\
T.~Russo was supported by the GA\v{C}R project 20-22230L; RVO: 67985840 and by Gruppo Nazionale per l'Analisi Matematica, la Probabilit\`a e le loro Applicazioni (GNAMPA) of Istituto Nazionale di Alta Matematica (INdAM), Italy.}

\date{\today}
\keywords{Smooth norm, polyhedral norm, local dependence on finitely many coordinates, partitions of unity, LUR norm, fundamental biorthogonal system, Implicit Function Theorem, slice.}
\subjclass[2020]{46B03, 46B20 (primary), and 46B15, 46B26, 47J07, 46T20 (secondary)}

\begin{abstract} Let $\mathcal{X}$ be a Banach space with a fundamental biorthogonal system and let $\mathcal{Y}$ be the dense subspace spanned by the vectors of the system. We prove that $\mathcal{Y}$ admits a $C^\infty$-smooth norm that locally depends on finitely many coordinates (LFC, for short), as well as a polyhedral norm that locally depends on finitely many coordinates. As a consequence, we also prove that $\mathcal{Y}$ admits locally finite, $\sigma$-uniformly discrete $C^\infty$-smooth and LFC partitions of unity and a $C^1$-smooth LUR norm. This theorem substantially generalises several results present in the literature and gives a complete picture concerning smoothness in such dense subspaces. Our result covers, for instance, every WLD Banach space (hence, all reflexive ones), $L_1(\mu)$ for every measure $\mu$, $\ell_\infty(\Gamma)$ spaces for every set $\Gamma$, $C(K)$ spaces where $K$ is a Valdivia compactum or a compact Abelian group, duals of Asplund spaces, or preduals of Von Neumann algebras. Additionally, under Martin Maximum {\sf MM}, all Banach spaces of density $\omega_1$ are covered by our result.
\end{abstract}
\maketitle
\tableofcontents

\section{Introduction}
The topic of smooth approximation is one of the classical themes in analysis. In the setting of Banach spaces the problem has several aspects, among which: the existence of smooth partitions of unity, smooth extensions, approximation of continuous or Lipschitz functions by smooth ones, smooth renormings, approximation with smooth norms, study of polynomials and spaces of polynomials, and so on. For an introduction to these directions of research we refer to the monographs \cite{DGZ, F book, HJ, P book}.

It is by now a well-known fact that the existence of a smooth norm (or more generally a smooth bump) on a Banach space $\X$ has several deep structural consequences for the space. For example, the presence of a $C^1$-smooth bump implies that the space is Asplund, \cite{F}; the presence of an LFC bump yields that the space is a $c_0$-saturated Asplund space, \cite{FZ, PWZ}. If $\X$ admits a $C^2$-smooth bump, then either it contains a copy of $c_0$, or it is super-reflexive with type $2$, \cite{FWZ}. Finally, if $\X$ admits a $C^\infty$-smooth bump and it contains no copy of $c_0$, then it has exact cotype $2k$, for some $k\in\N$, and it contains $\ell_{2k}$, \cite{D very smooth}. Each of these results involves at some point the completeness of the space $\X$, most frequently via the appeal to some form of variational principles, such as the Ekeland variational principle \cite{Ekeland}, Stegall's variational principle \cite{Stegall}, the Borwein--Preiss smooth variational principle \cite{BP}, or the compact variational principle \cite{DF}. It is therefore unclear whether any, possibly weaker, form of the above results could be valid for general normed spaces. In this direction, it was pointed out in \cite[p.~96]{BL} that it is not known whether $\X$ is an Asplund space provided the set where its norm fails to be Fr\'echet differentiable is `small' in some sense (also see \cite[Problem 148]{GMZ}). For example, it is unknown if there is a norm on $\ell_1$ that is Fr\'echet differentiable outside a countable union of hyperplanes.\smallskip

Nevertheless, some scattered results concerning normed spaces are present in the literature. Vanderwerff \cite{V} proved that every normed space with a countable algebraic basis admits a $C^1$-smooth norm; this result was later improved to obtain a $C^\infty$-smooth norm \cite{H smooth LFC}, a polyhedral norm \cite{DFH}, and an analytic one \cite{DHR}. These results and the previous discussion motivated \cite[Problem 149]{GMZ}, \cite{HR densely}, and recent research of the present authors \cite{DHR}, where the following problem was posed.

\begin{problem} Let $\X$ be a Banach space and $k\in\N\cup \{\infty, \omega\}$. Is there a dense subspace $\Y$ of $\X$ that admits a $C^k$-smooth norm?
\end{problem}

Although the problem is seemingly very general and ambitious, note that \cite{DHR} answers it in the positive for $\X$ separable and $k=\omega$. Moreover, in \cite{DHR} it was solved in the positive for $\ell_\infty$ and $k=\omega$, $\ell_1(\mathfrak{c})$ and $k=\omega$, and spaces with long unconditional bases and $k=\infty$. It is worth pointing out that in these results the density of smooth norms cannot be guaranteed in general. The main contribution of the present paper is a vast generalisation of the previous results by means of the following theorem. 

\begin{mainth}\label{Th A} Let $\X$ be a Banach space with a fundamental biorthogonal system $\{e_\alpha;\p_\alpha\}_{\alpha\in\Gamma}$. Consider the dense subspace $\Y$ of $\X$ given by $\Y\coloneqq\spn \{e_\alpha\}_{\alpha\in\Gamma}$. Then:
\begin{romanenumerate}
    \item \label{Th A: poly} $\Y$ admits a polyhedral and LFC norm,
    \item \label{Th A: smooth} $\Y$ admits a $C^\infty$-smooth and LFC norm,
    \item \label{Th A: bump} $\Y$ admits a $C^\infty$-smooth and LFC bump,
    \item \label{Th A: partitions of unity} $\Y$ admits locally finite, $\sigma$-uniformly discrete $C^\infty$-smooth and LFC partitions of unity,
    \item \label{Th A: C1 LUR} $\Y$ admits a $C^1$-smooth LUR norm.
\end{romanenumerate}
Moreover, norms as in (\ref{Th A: poly}), (\ref{Th A: smooth}), and (\ref{Th A: C1 LUR}) are dense in the set of all equivalent norms on $\Y$.
\end{mainth}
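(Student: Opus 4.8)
The plan is to reduce the five items and the density clause to a single quantitative statement and then harvest the rest by soft arguments. The statement to prove is: for every equivalent norm $p$ on $\Y$ and every $\e>0$ there is a norm $N$ on $\Y$ with $(1-\e)\,p\leq N\leq(1+\e)\,p$ that is simultaneously polyhedral and LFC, and likewise one that is simultaneously $C^\infty$-smooth and LFC. Granting this, (\ref{Th A: poly}) and (\ref{Th A: smooth}) together with their share of the ``moreover'' are immediate. Item (\ref{Th A: bump}) follows by composing such a smooth $N$ with a function $\phi\in C^\infty(\R)$ that equals $1$ near $0$ and vanishes off $[-1,1]$: the bump $b\coloneqq\phi\circ N$ is $C^\infty$-smooth, it is LFC away from the origin because $N$ is, and it is LFC at the origin because it is constant there. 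Item (\ref{Th A: partitions of unity}) follows from (\ref{Th A: bump}) by the standard construction of locally finite, $\sigma$-uniformly discrete $C^\infty$-smooth LFC partitions of unity on any normed space admitting a $C^\infty$-smooth LFC bump; completeness is irrelevant here, the metric topology of $\Y$ being paracompact. For (\ref{Th A: C1 LUR}) I would argue separately: $\Y$ admits an LUR renorming — this is where the fact that every $y\in\Y$ has finite support with respect to $\{e_\alpha;\p_\alpha\}$ is exploited — and combining an LUR norm with a $C^\infty$-smooth one by a standard device that preserves $C^1$-smoothness of the smooth summand while inheriting rotundity from the LUR one yields a $C^1$-smooth LUR norm; since the LUR norms and the smooth norms above are each dense among the equivalent norms on $\Y$, so are the resulting ones.

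The whole weight therefore rests on the quantitative construction of the polyhedral/$C^\infty$ LFC norm, and here I would use the \emph{finite-support structure} of $\Y$: every $y\in\Y$ equals $\sum_{\alpha\in\supp y}\p_\alpha(y)e_\alpha$ with $\supp y$ finite, so $\Y=\bigcup\{\Y_F:F\subseteq\Gamma\text{ finite}\}$, where $\Y_F\coloneqq\spn\{e_\alpha:\alpha\in F\}$ is finite-dimensional, and each $\p_\alpha$ is bounded on $\Y$. The local input is that on a sufficiently small ball around any $y_0\neq 0$ the norm $p$ is $(1+\e)$-approximated by a function of the finitely many coordinates indexed by a fixed finite set $F_0\supseteq\supp y_0$, because nearby points differ from $y_0$ only by a small vector, hence have small coordinates off $F_0$. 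The task is then to assemble these finite-coordinate local models into one convex, symmetric, positively homogeneous function still $(1+\e)$-close to $p$. I would do this by exhibiting $N$ as the Minkowski functional of a body $B$ chosen so that $B\cap\Y_F$ is a polytope for every finite $F$ (yielding (\ref{Th A: poly})) or so that near each of its points $\partial B$ is a level set of a $C^\infty$ LFC function to which the Implicit Function Theorem applies (yielding (\ref{Th A: smooth})); the LFC property is forced by describing $\partial B$ near each point using only finitely many coordinates, which the finite-support structure makes compatible with convexity, while slices $\{y\in B:f(y)>1-\delta\}$ of the target ball serve as the charts in which $p$ is nearly linear and can be matched by a norm-controlled, finitely supported functional.

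The main obstacle — and the point that is genuinely new beyond the countable-Hamel-basis results of Vanderwerff, \cite{H smooth LFC}, \cite{DFH} and \cite{DHR}, which I would invoke (in quantitative form) as the base case — is performing this assembly when $\Gamma$ is uncountable. There is no countable exhausting filtration of $\Y$, and the coordinate projections $P_F$ need not be bounded, so finite-coordinate models cannot simply be cut out of globally defined functionals with norm control. I would organise the construction as a transfinite recursion along a chain $(\Y_\xi)_{\xi<\kappa}$ exhausting $\Y$ and continuous at limits, at each step extending the norm already built on $\Y_\xi$ across a block of new coordinates of strictly smaller cardinality — which the inductive hypothesis handles — while (i) preserving polyhedrality/smoothness and LFC, (ii) keeping the extension $(1+\e)$-close to $p$, and, the delicate point, (iii) controlling the equivalence constant uniformly along the recursion, so that the limiting norm $N=\bigcup_\xi N_\xi$ is genuinely $(1+\e)$-equivalent to $p$ on all of $\Y$ and does not degrade at limit stages. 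Arranging the block extensions to respect the ambient geometry — so that no cancellation between old and new coordinates is lost, which rules out naive product-type or $\max$-type gluings — while still being finite-coordinate on a fixed-width collar of the ``old'' unit sphere is the crux; I expect it to be settled by realising each extension as the gauge of a body that agrees with the ball of $p$ up to $\e$ and is finite-coordinate in the old directions on a collar whose width is tuned so that both LFC and the uniform constant survive.
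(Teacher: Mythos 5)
Your soft reductions for (\ref{Th A: bump}) and (\ref{Th A: partitions of unity}) are fine, and your reduction of (\ref{Th A: poly}), (\ref{Th A: smooth}) and the ``moreover'' clause to a single quantitative approximation statement matches the paper. But the two load-bearing steps of your outline do not work as described.

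First, the claim that one obtains (\ref{Th A: C1 LUR}) by ``combining an LUR norm with a $C^\infty$-smooth one by a standard device that preserves $C^1$-smoothness of the smooth summand while inheriting rotundity from the LUR one'' is simply false: there is no such device. Any convex combination (or $\ell_2$-sum of squares, or sup) of a smooth norm with a non-smooth LUR norm will inherit the non-smoothness of the LUR summand at the points where the latter is not differentiable. The existence of $C^1$-smooth LUR renormings is a genuine theorem even for separable Banach spaces (McLaughlin--Poliquin--Vanderwerff--Zizler) and is considerably harder in the non-separable setting. The paper proves this item by adapting the machinery of H\'ajek--Proch\'azka, which smoothens up Troyanski's renorming formula itself using uniform Lipschitz estimates; there is no shortcut from ``LUR exists'' plus ``$C^\infty$-smooth exists'' to ``$C^1$-smooth LUR exists''.

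Second, the transfinite recursion you propose for the core construction is not what the paper does, and you yourself leave ``the crux'' unresolved. The difficulty is real: at limit stages of your chain $(\Y_\xi)$, there is no mechanism that preserves the LFC property or the local finite-dimensional models across the union, and a norm on $\Y_\xi$ does not extend to $\Y_{\xi+1}$ with controlled equivalence constant and preserved smoothness/polyhedrality by any soft argument. The paper avoids transfinite recursion entirely. It works with the directed family $\F$ of all finite-dimensional subspaces spanned by basis vectors, and runs an ordinary induction on $\dim F$: one first replaces the norm by an LUR one (Troyanski/Moreno), so that the sphere of each $F$ can be covered by finitely many small slices $\Omega_F$; the crucial, and delicate, part is the choice of parameters $\e_F$, $\theta_F$ so that the \emph{compatibility fact}
$\P_n\cap T(F,\theta_F/2)=\P_F\cap T(F,\theta_F/2)$
holds, where $\P_F$ is the ball with the slices of subspaces of $F$ removed and $T(F,\theta)$ is a tubular neighbourhood of $F$ in $\B_\Y$. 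This tubular neighbourhood ``protects'' $F$ from the slices of all other subspaces $G$, and it is precisely this protection that yields the polyhedron structure of $\P\cap F$ and the LFC property (and, after enlarging slices slightly and summing smooth bumps of the slice functionals, the $C^\infty$-smoothness). None of this is captured by your extension-along-a-chain scheme, and the compatibility condition has no analogue in your outline; this is the missing idea.
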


The main novel parts of Theorem \ref{Th A} are claims (\ref{Th A: poly}) and (\ref{Th A: smooth}), whose proof constitutes the core of the paper and is presented in Section \ref{Sec: main proof}. Clauses (\ref{Th A: bump})--(\ref{Th A: C1 LUR}) follow from the former ones via known results or adaptations of known techniques. More precisely, (\ref{Th A: bump}) is an obvious consequence of (\ref{Th A: smooth}), while the existence of locally finite, $\sigma$-uniformly discrete $C^\infty$-smooth partitions of unity follows from (\ref{Th A: smooth}) via \cite[Corollary 6]{J} or \cite[Theorem 2]{Haydon}. The fact that functions in the partition of unity can be chosen to be LFC follows by inspection of the proof of \cite{Haydon} as we will briefly discuss in Section \ref{Sec: partitions}. Finally, (\ref{Th A: C1 LUR}) follows from (\ref{Th A: smooth}) by using ideas from \cite{HP} and we shall explain this in Section \ref{Sec: C1 LUR}.\smallskip

Let us point out that Theorem \ref{Th A} draws a complete picture concerning smoothness in the sense that it implies the existence of smooth norms, norm approximation by smooth norms, $C^1$-smooth LUR norms, and the existence of partitions of unity, which are instrumental for the smooth approximation of continuous or Lipschitz functions (see, \emph{e.g.}, \cite[\S VIII.3]{DGZ} or \cite[Chapter 7]{HJ}). The unique part of the result where one might ponder possible improvements is (\ref{Th A: C1 LUR}), where it is natural to ask whether $\Y$ admits $C^k$-smooth LUR norms for some $k\geq 2$. Nonetheless, this is not the case in general, since a normed space with a $C^2$-smooth LUR norm has super-reflexive completion (Theorem \ref{Thm: C2 LUR super-reflexive}). Hence, in general it is not possible to replace $C^1$-smoothness with higher-order smoothness in (\ref{Th A: C1 LUR}) (even in the separable case). Although Theorem \ref{Thm: C2 LUR super-reflexive} is a more or less formal consequence of \cite[Theorem 3.3(ii)]{FWZ}, it is of notable importance in our context since it is one of the few instances where the existence of a smooth norm on an incomplete normed space bears structural consequences for the space.

As a simple consequence of Theorem \ref{Th A}, we can obtain one further such instance. Indeed, if a Banach space $\X$ admits a fundamental biorthogonal system, then densely many norms on $\X$ are $C^\infty$-smooth and LFC on a dense and open subset of $\X$ (Corollary \ref{Cor: open dense}). On the one hand, this result should be compared to the classical characterisation of Asplund spaces, that a Banach space $\X$ is Asplund if and only if every norm on $\X$ is Fr\'echet differentiable on a dense $G_\delta$ set. On the other hand, in the particular case of Banach spaces with fundamental biorthogonal systems, it generalises Moreno's result that every Banach space admits a norm that is Fr\'echet differentiable on a dense open set, \cite{Moreno open dense} (with Moreno's argument it doesn't seem possible to obtain the density of such norms). \smallskip

We now discuss how general our results are and compare them to the literature. If we restrict our attention to separable Banach spaces, a classical result due to Marku\v{s}evi\v{c} \cite{Markushevich} asserts that every separable Banach space admits an M-basis; hence, Theorem \ref{Th A} applies to every separable Banach space. Therefore, our result generalises simultaneously \cite{H smooth LFC}, where a $C^\infty$-smooth LFC norm is constructed in every normed space with a countable algebraic basis, and \cite{DFH}, where a polyhedral norm is constructed in such spaces. Here we should observe that, for a normed space, admitting a countable algebraic basis is equivalent to being the linear span of the vectors of an M-basis, again by \cite{Markushevich}. On the other hand, in \cite{DHR} an analytic norm is also constructed in normed spaces with a countable algebraic basis, while in our result it is not possible in general to obtain analytic norms, \cite[Theorem 3.10]{DHR}.

For non-separable Banach spaces the problem has only been faced in \cite[Theorem B]{DHR}, where a $C^\infty$-smooth norm is constructed in the linear span of every long unconditional Schauder basis (and in \cite[Theorem A]{DHR}, concerning the concrete spaces $\ell_\infty$ and $\ell_1(\mathfrak{c})$, as mentioned above). Once more, Theorem \ref{Th A} is substantially stronger, since we additionally obtain an approximation result, the LFC condition, polyhedral norms, partitions of unity, and $C^1$-smooth LUR norms. Moreover, the assumption on the space is much more general, for the existence of an unconditional basis is a rather strong assumption, while the existence of a fundamental biorthogonal system is a much weaker one, as we now discuss.

A large class of Banach spaces that admit a fundamental biorthogonal system (even an M-basis) is the class of Plichko spaces, \cite{HMVZ, KKL, Kalenda survey}. Such a class of Banach spaces contains all WLD Banach spaces, hence all WCG spaces and in particular all reflexive ones; besides, every $L_1(\mu)$ space and every $C(K)$ space, where $K$ is a Valdivia compactum or an Abelian compact group, is a Plichko space (see, \emph{e.g.}, \cite[\S 6.2 and \S 5.1]{Kalenda survey}, \cite{Kalenda group}). More generally Kalenda \cite{Kalenda PLMS} recently proved that every Banach space with a projectional skeleton admits a (strong) M-basis. Among the Banach spaces that admit a projectional skeleton we could additionally mention duals of Asplund spaces \cite{Kubis}, preduals of Von Neumann algebras \cite{BHK}, or preduals of JBW$^*$-triples \cite{BHKPP}. Additionally, there are several examples of concrete Banach spaces where a fundamental biorthogonal system can be constructed, for example: $\ell_\infty(\Gamma)$ for every set $\Gamma$ \cite{DJ}, $\ell_\infty^c(\Gamma)$ when $|\Gamma|\leq\mathfrak{c}$ \cite{GK, plichko no fund}, or $C([0,\eta])$ for every ordinal $\eta$ (this is standard, see for example \cite[Proposition 5.11]{Kalenda PLMS}). More generally, $C(T)$ has an M-basis, for every tree $T$, \cite[\S 5.3]{Kalenda PLMS}. Moreover, it is proved in \cite{DJ} that a Banach space $\X$ with ${\rm dens}\, \X=\kappa$ admits a fundamental biorthogonal system provided that $\X$ has a WCG quotient of density $\kappa$. Similarly, Plichko \cite{plichko bdd fund} proved that a Banach space $\X$ with ${\rm dens}\, \X=\kappa$ admits a fundamental biorthogonal system if and only if $\X$ has a quotient of density $\kappa$ with a long Schauder basis. Finally, it is consistent with {\sf ZFC}, and in particular true under Martin Maximum {\sf MM}, that every Banach space of density $\omega_1$ admits a fundamental biorthogonal system, \cite{T MA}.

Let us add one more comment concerning the space $\ell_\infty$. On the one hand, in \cite[Theorem 3.1]{DHR} an analytic norm is constructed in the dense subspace of $\ell_\infty$ comprising all sequences that attain finitely many values; while Theorem \ref{Th A} only gives a $C^\infty$-smooth norm in a subspace with a less explicit description. On the other hand, Theorem \ref{Th A} also yields the LFC condition, a polyhedral norm, a $C^1$-smooth LUR norm, and partitions of unity; additionally, it holds for $\ell_\infty(\Gamma)$ for every set $\Gamma$.\smallskip

Several months after the present research was completed, the authors obtained the following result \cite{DHR pq}, related to Theorem \ref{Th A}. If $1\leq p<\infty$, the dense subspace $\Y_p\coloneqq \bigcup_{0<q<p} \ell_q(\Gamma)$ of $\ell_p(\Gamma)$ admits a $C^\infty$-smooth and LFC norm. The interest of the result is that $\Y_p$ is not the linear span of a biorthogonal system and it has linear dimension equal to that of $\ell_p(\Gamma)$. \smallskip

Finally recall that it is in general unknown if, in a non-separable Banach space with a $C^k$-smooth norm, $C^k$-smooth norms are dense in the set of all equivalent norms. Among the few results available in the literature let us mention \cite{AFST, BS, Smith, SmTr}, where the problem is solved for spaces with a small boundary such as $c_0(\Gamma)$. In particular, the $C^k$-smooth approximation of norms is open in $\ell_2(\omega_1)$, or $C([0,\omega_1])$, while our Theorem \ref{Th A} gives the $C^\infty$-smooth approximation in some dense subspace of the said spaces. \smallskip

Our paper is organised as follows: Section \ref{Sec: prelim} contains the definitions of some notions that we will need and some auxiliary (known or folklore) results. Section \ref{Sec: main proof} is devoted to the main part of the proof of Theorem \ref{Th A} and we prove clauses (\ref{Th A: poly}) and (\ref{Th A: smooth}). A brief discussion of (\ref{Th A: partitions of unity}) is given in Section \ref{Sec: partitions}. Finally in Section \ref{Sec: C1 LUR} we discuss the existence of $C^k$-smooth LUR norms: we give the proof of Theorem \ref{Th A}(\ref{Th A: C1 LUR}) borrowing our methods from \cite{HP} and we prove Theorem \ref{Thm: C2 LUR super-reflexive}.

\section{Preliminary material}\label{Sec: prelim}
Our notation is standard as in, \emph{e.g.}, \cite{AK, DGZ, FHHMZ}. Throughout the paper we always consider normed spaces over the reals. We use the calligraphic font $\X, \Y$ for infinite-dimensional normed spaces and we denote by $F,G,H, \dots$ their finite-dimensional subspaces. For an infinite-dimensional normed space $\X$, we denote by $\X^*$, $\uS_\X$, and $\B_\X$ the dual space, the unit sphere, and the closed unit ball respectively; accordingly, the unit sphere and ball of the finite-dimensional normed space $F$ are $\uS_F$ and $\B_F$, respectively. We use the calligraphic notation for the unit sphere since we keep the letter $S$ for a generic slice as we will make extensive use of slices in our arguments. We write $\langle \p, x \rangle$ to denote the action of a functional $\p \in \X^*$ at a point $x \in \X$. When talking about norm approximations, hence in particular density of norms, we always refer to uniform approximation on bounded sets. More precisely, the assertion that a norm $\n$ on $\X$ can be approximated by norms with property $P$ means that, for every $\e>0$, there is a norm $\nn\cdot$ on $\X$ with property $P$ and such that $(1-\e)\nn\cdot \leq \n \leq (1+\e)\nn\cdot$.

\subsection{Convexity and slices}
A finite-dimensional normed space $F$ is \emph{polyhedral} if its unit ball is a polyhedron, \emph{i.e.}, it is a finite intersection of closed half-spaces; an infinite-dimensional normed space is polyhedral if every its finite-dimensional subspace is so. A normed space $(\X,\n)$ is \emph{locally uniformly rotund} (LUR, for short) if, for every $x\in \uS_\X$ and every sequence $(x_n)_{n\in\N}\subseteq \uS_\X$ with $\|x_n + x\|\to 2$ one has $x_n\to x$. A \emph{norming functional} for $x\in \X$ is a functional $\psi\in \uS_{\X^*}$ such that $\langle\psi,x\rangle=\|x\|$. A \emph{slice} of $\B_\X$ is a set of the form $S(\psi,\delta)\coloneqq\{y\in \B_\X\colon \langle\psi, y\rangle>1-\delta\}$, for some $\psi\in \uS_{\X^*}$ and $\delta>0$. For us it will be convenient to consider slices $S(\psi,\delta)$ where the functional $\psi$ attains its norm. Therefore, we write $S(x,\psi,\delta)$ to indicate the slice $S(\psi,\delta)$ whenever $\psi$ attains its norm at $x$. A point $x\in \uS_\X$ is \emph{strongly exposed} if there is a norming functional $\psi$ for $x$ such that $\diam\left( S(x,\psi, \delta)\right)\to 0$, as $\delta\to 0^+$. It is a standard fact that if $\X$ is LUR then every point of $\uS_\X$ is strongly exposed (see, for instance, \cite[Problem 8.27]{FHHMZ}), namely we have:

\begin{fact}\label{Fact: small slice} Let $\X$ be an LUR normed space, $x\in \uS_\X$, and $\psi\in \uS_{\X^*}$ be a norming functional for $x$. Then $\diam\left( S(x,\psi, \delta)\right)\to 0$, as $\delta\to 0^+$.
\end{fact}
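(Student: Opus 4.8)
The plan is to prove Fact~\ref{Fact: small slice}, which states that in an LUR space, every norming functional strongly exposes the point it norms. This is a routine but instructive argument, so I will sketch it rather than belabour the computation.

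First I would fix $x\in\uS_\X$ and a norming functional $\psi\in\uS_{\X^*}$, so $\langle\psi,x\rangle=1$. Arguing by contradiction, suppose $\diam\left(S(x,\psi,\delta)\right)$ does \emph{not} tend to $0$ as $\delta\to 0^+$. Since the slices $S(x,\psi,\delta)$ are nested and shrink as $\delta$ decreases, this means there is some $\eta>0$ such that $\diam\left(S(x,\psi,\delta)\right)>\eta$ for every $\delta>0$. Hence for each $n\in\N$, picking $\delta=\tfrac1n$, I can find two points $y_n,z_n\in S(x,\psi,\tfrac1n)$ with $\|y_n-z_n\|>\eta$. Thus $\langle\psi,y_n\rangle>1-\tfrac1n$ and $\langle\psi,z_n\rangle>1-\tfrac1n$, while $\|y_n\|,\|z_n\|\leq 1$; consequently $\langle\psi,y_n\rangle\to1$ and $\langle\psi,z_n\rangle\to1$, which forces $\|y_n\|\to1$ and $\|z_n\|\to1$ as well.

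The key step is then to feed these sequences into the LUR condition, applied at the point $x$. Consider the midpoints $w_n\coloneqq\tfrac12(y_n+z_n)$: we have $\langle\psi,w_n\rangle>1-\tfrac1n\to1$, so $\|w_n\|\to1$ and $\|y_n+z_n\|\to2$. Now I would apply the LUR condition a first time to the pair consisting of $x$ and (a normalization of) $y_n$: since $\|x+y_n\|\geq\langle\psi,x+y_n\rangle>2-\tfrac1n\to2$, and after normalizing $y_n$ one still has $\|x+y_n/\|y_n\|\|\to2$, LUR yields $y_n/\|y_n\|\to x$, whence $y_n\to x$. Symmetrically $z_n\to x$. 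But then $\|y_n-z_n\|\to0$, contradicting $\|y_n-z_n\|>\eta$. This contradiction proves the Fact.

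The main (and only) obstacle is the minor bookkeeping needed because LUR is phrased for sequences on the unit sphere $\uS_\X$, whereas the points $y_n,z_n$ produced from the slices lie merely in $\B_\X$; one must normalize and check that normalization does not destroy the convergence $\|x+y_n/\|y_n\|\|\to2$, which is immediate from $\|y_n\|\to1$ and the triangle inequality. Alternatively, one can bypass normalization entirely by noting that $\|y_n-z_n\|\leq\|y_n-x\|+\|x-z_n\|$ and that $\tfrac12\|y_n+z_n\|\leq\|w_n\|\leq1$ together with a direct convexity estimate (using uniform convexity of the modulus at $x$) already pins down $y_n,z_n$ near $x$; either way the argument is short. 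I would present the normalization version as it matches the literal statement of the LUR definition given in the preliminaries.
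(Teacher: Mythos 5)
Your argument is correct and is the standard proof of this fact; the paper itself gives no proof, citing instead \cite[Problem~8.27]{FHHMZ}, so there is nothing internal to compare against. One small remark: the midpoint paragraph (introducing $w_n$ and deducing $\|y_n+z_n\|\to 2$) is never used afterwards — you switch immediately to applying LUR to the pair $(x, y_n/\|y_n\|)$, which is the step that actually closes the argument — so that paragraph could simply be deleted. The normalization bookkeeping is handled correctly: $\|y_n\|\to 1$ forces $\|y_n/\|y_n\|-y_n\|\to 0$, hence $\|x+y_n/\|y_n\|\|\to 2$, and LUR (as stated in the preliminaries, for sequences in $\uS_\X$) then gives $y_n/\|y_n\|\to x$ and so $y_n\to x$; the same for $z_n$, contradicting $\|y_n-z_n\|>\eta$.
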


We also collect here for future reference the following standard observation concerning slices; the proof is so simple that we include it here for the sake of completeness.

\begin{fact}\label{Fact: continuity slice} Let $\X$ be a normed space and let $\psi\in\uS_{\X^*}$ attain its norm at $x\in\uS_\X$. Then $\delta \mapsto \diam(S(x,\psi, \delta))$ is a continuous function on $(0,\infty)$.
\end{fact}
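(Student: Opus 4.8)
The plan is to fix $\psi \in \uS_{\X^*}$ attaining its norm at $x \in \uS_\X$, and to show that the function $f(\delta) \coloneqq \diam(S(x,\psi,\delta))$ is both monotone and continuous on $(0,\infty)$. Monotonicity is immediate: if $0 < \delta_1 \leq \delta_2$, then $S(x,\psi,\delta_1) \subseteq S(x,\psi,\delta_2)$, so $f$ is nondecreasing. A nondecreasing function can only fail to be continuous via a jump, so it suffices to rule out jumps, i.e. to show $f(\delta^-) = f(\delta)$ and $f(\delta) = f(\delta^+)$ for every $\delta > 0$. Since $S(x,\psi,\delta) = \bigcup_{\delta' < \delta} S(x,\psi,\delta')$, any two points $y,z \in S(x,\psi,\delta)$ already lie in some $S(x,\psi,\delta')$ with $\delta' < \delta$, whence $\|y-z\| \leq f(\delta')\leq f(\delta^-)$; taking the supremum over $y,z$ gives $f(\delta) \leq f(\delta^-)$, and the reverse inequality is monotonicity, so $f$ is left-continuous.

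For right-continuity I would argue as follows. Let $\delta > 0$ and let $\eta > 0$; I want $\delta' > \delta$ with $f(\delta') \leq f(\delta) + \eta$. Pick $y, z \in S(x,\psi,\delta')$; the point is to ``push them back'' into $S(x,\psi,\delta)$ at a controlled cost. Since $\psi$ attains its norm at $x$ and $\langle\psi,x\rangle = 1$, for $t \in [0,1]$ the convex combination $y_t \coloneqq (1-t)y + tx$ satisfies $\langle\psi, y_t\rangle = (1-t)\langle\psi,y\rangle + t > (1-t)(1-\delta') + t = 1 - (1-t)\delta'$, so $y_t \in S(x,\psi,\delta)$ as soon as $(1-t)\delta' \leq \delta$, i.e. $t \geq 1 - \delta/\delta'$. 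Choosing $t = t_0 \coloneqq 1 - \delta/\delta'$ and defining $z_{t_0}$ analogously, both $y_{t_0}$ and $z_{t_0}$ lie in $S(x,\psi,\delta)$, and $\|y - y_{t_0}\| = t_0\|y - x\| \leq 2t_0$ (both points are in $\B_\X$), likewise for $z$. Hence
\[
\|y - z\| \leq \|y_{t_0} - z_{t_0}\| + \|y - y_{t_0}\| + \|z - z_{t_0}\| \leq f(\delta) + 4t_0 = f(\delta) + 4\Bigl(1 - \tfrac{\delta}{\delta'}\Bigr).
\]
Taking the supremum over $y,z \in S(x,\psi,\delta')$ yields $f(\delta') \leq f(\delta) + 4(1 - \delta/\delta')$, and since $4(1 - \delta/\delta') \to 0$ as $\delta' \to \delta^+$, we get $f(\delta^+) \leq f(\delta)$; combined with monotonicity, $f$ is right-continuous.

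This is entirely elementary, and I expect no genuine obstacle. The only mild subtlety is in the right-continuity step: one cannot simply intersect over a decreasing family of slices, because $\bigcap_{\delta' > \delta} S(x,\psi,\delta') = S(x,\psi,\delta) \cup \{\text{boundary part where } \langle\psi,\cdot\rangle = 1-\delta\}$ may be strictly larger than $S(x,\psi,\delta)$, so the convexity/retraction argument toward the exposing point $x$ is what does the real work. Everything else is the standard fact that a monotone function is continuous iff it has no jumps.
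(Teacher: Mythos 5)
Your proposal is correct and uses essentially the same idea as the paper for the main step: push points of the larger slice $S(x,\psi,\delta')$ back into $S(x,\psi,\delta)$ via convex combinations with the exposing point $x$. The paper streamlines this slightly by noting $\|y_\lambda - z_\lambda\| = (1-\lambda)\|y-z\|$ exactly (the pushback is a pure scaling of $y-z$), which yields the two-sided estimate $0 \leq f(\delta+\e)-f(\delta) \leq 2\e/\delta$ in one stroke and makes your separate treatment of left-continuity via monotonicity unnecessary; your triangle-inequality bookkeeping gives a slightly weaker constant but is equally valid.
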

\begin{proof} Fix arbitrarily $\e>0$ and take $y,z\in S(x,\psi,\delta+\e)$. Consider the points $y_\lambda,z_\lambda\in \B_\X$ defined by $y_\lambda\coloneqq\lambda x+ (1-\lambda)y$ and $z_\lambda\coloneqq\lambda x+ (1-\lambda)z$, where $\lambda\coloneqq\frac{\e}{\delta+\e}\in(0,1)$. Then $y_\lambda,z_\lambda\in S(x,\psi, \delta)$, since
\begin{equation*}
    \langle\psi, y_\lambda\rangle= \langle\psi, \lambda x+ (1-\lambda)y\rangle > \lambda + (1-\lambda) (1-\delta-\e)= 1-\delta
\end{equation*}
(and analogously for $z_\lambda$). Hence,
\begin{equation*}
    \|y-z\|=\frac{1}{1-\lambda} \|y_\lambda- z_\lambda\| \leq\frac{1}{1-\lambda}\diam(S(x,\psi, \delta))= \left(1+\frac{\e}{\delta}\right) \diam(S(x,\psi, \delta)).
\end{equation*}
This yields $0\leq \diam(S(x,\psi,\delta+\e))- \diam(S(x,\psi,\delta))\leq 2\e/\delta$ and we are done.
\end{proof}

\subsection{Fundamental biorthogonal systems}
A \emph{biorthogonal system} in a normed space $\X$ is a system $\{e_\alpha; \p_\alpha\}_{\alpha\in\Gamma}$, with $e_\alpha\in\X$ and $\p_\alpha\in\X^*$, such that $\langle\p_\alpha, e_\beta \rangle=\delta_{\alpha,\beta}$ ($\alpha,\beta\in \Gamma$). A biorthogonal system is \emph{fundamental} if ${\rm span}\{e_\alpha\}_{\alpha\in\Gamma}$ is dense in $\X$; it is \emph{total} when ${\rm span}\{\p_\alpha\} _{\alpha\in\Gamma}$ is $w^*$-dense in $\X^*$. A \emph{Marku\v{s}evi\v{c} basis} (M-basis, for short) is a fundamental and total biorthogonal system. A biorthogonal system $\{e_\alpha; \p_\alpha\}_{\alpha\in\Gamma}$ is \emph{bounded} if there is $M<\infty$ with $\|x_\alpha\|\cdot \|\p_\alpha\|\leq M$ ($\alpha\in \Gamma$).

The following standard lemma, concerning distances of vectors from finite-dimensional subspaces in presence of a bounded biorthogonal system, will be used frequently in our argument. We refer to \cite[\S 1.2]{GL} for a more general treatment of such types of results.

\begin{lemma}\label{Lemma: bounded b.s.} Let $(\X,\n)$ be a normed space and $\{e_\alpha;\p_\alpha\}_{\alpha\in \Gamma}$ be a bounded biorthogonal system in $\X$ with $\|e_\alpha\|=1$, $\|\p_\alpha \|\leq M$ ($\alpha\in\Gamma$). Let $\alpha_1,\dots,\alpha_n, \beta_1,\dots,\beta_m\in \Gamma$ be such that $\alpha_i\neq \alpha_j$ and $\beta_i \neq \beta_j$ for $i\neq j$; set $F\coloneqq\spn \{e_{\alpha_1},\dots, e_{\alpha_n}\}$ and $G\coloneqq\spn \{e_{\beta_1},\dots, e_{\beta_m}\}$.
\begin{romanenumerate}
    \item\label{Item Lemma: bdd disjoint} If $\{\alpha_1,\dots,\alpha_n\} \cap \{\beta_1,\dots,\beta_m\}=\emptyset$ and $x\in \uS_F$, then
    \begin{equation*}
        \dist(x,G)\geq\frac{1}{nM}.
    \end{equation*}
    \item\label{Item Lemma: bdd general} More generally, for every $x\in F$,
    \begin{equation*}
        \dist(x,F\cap G)\leq nM \cdot \dist(x,G).    
    \end{equation*}
    \end{romanenumerate}
\end{lemma}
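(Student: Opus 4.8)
The plan is to prove \eqref{Item Lemma: bdd general} first, as the genuinely general statement, and then recover \eqref{Item Lemma: bdd disjoint} as the special case in which the two index sets are disjoint: in that case $F\cap G=\{0\}$, so $\dist(x,F\cap G)=\|x\|=1$ for $x\in\uS_F$, and \eqref{Item Lemma: bdd general} becomes $1\leq nM\cdot\dist(x,G)$.

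Before proving \eqref{Item Lemma: bdd general} I would pin down the subspace $F\cap G$. After relabelling the $\alpha$'s among themselves and the $\beta$'s among themselves (which affects neither $F$ nor $G$), I may assume that $\{\alpha_1,\dots,\alpha_n\}\cap\{\beta_1,\dots,\beta_m\}=\{\alpha_1,\dots,\alpha_k\}$ with $\alpha_i=\beta_i$ for $i\leq k$, where $0\leq k\leq\min\{n,m\}$ (and $k=0$ means the sets are disjoint). The claim is that $F\cap G=\spn\{e_{\alpha_1},\dots,e_{\alpha_k}\}$. The inclusion ``$\supseteq$'' is immediate. For ``$\subseteq$'', take $v\in F\cap G$ and write $v=\sum_{i=1}^n a_i e_{\alpha_i}$; for each $i>k$ the functional $\p_{\alpha_i}$ vanishes on $G$ (since $\alpha_i\notin\{\beta_1,\dots,\beta_m\}$), whence $a_i=\langle\p_{\alpha_i},v\rangle=0$, so indeed $v\in\spn\{e_{\alpha_1},\dots,e_{\alpha_k}\}$.

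With this in hand the proof of \eqref{Item Lemma: bdd general} is short. Fix $x=\sum_{i=1}^n a_i e_{\alpha_i}\in F$ and an arbitrary $g\in G$. For every $i$ with $k<i\leq n$ the same observation gives $\langle\p_{\alpha_i},g\rangle=0$, while $\langle\p_{\alpha_i},x\rangle=a_i$ by biorthogonality; hence
\[
|a_i|=|\langle\p_{\alpha_i},x-g\rangle|\leq\|\p_{\alpha_i}\|\cdot\|x-g\|\leq M\|x-g\|.
\]
Since $\sum_{i=1}^k a_i e_{\alpha_i}\in F\cap G$ and $\|e_{\alpha_i}\|=1$, the triangle inequality yields
\[
\dist(x,F\cap G)\leq\Big\|\sum_{i=k+1}^n a_i e_{\alpha_i}\Big\|\leq\sum_{i=k+1}^n|a_i|\leq (n-k)M\,\|x-g\|\leq nM\,\|x-g\|.
\]
Taking the infimum over $g\in G$ gives $\dist(x,F\cap G)\leq nM\cdot\dist(x,G)$, i.e.\ \eqref{Item Lemma: bdd general}, and \eqref{Item Lemma: bdd disjoint} is the case $k=0$ applied to $x\in\uS_F$.

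I do not expect a real obstacle here: the argument is entirely elementary, the only mild point requiring care being the identification of $F\cap G$ with the span of the common basis vectors, which is precisely where the biorthogonality relations $\langle\p_\alpha,e_\beta\rangle=\delta_{\alpha,\beta}$ and the uniform bound $\|\p_\alpha\|\leq M$ are used. As an alternative one could prove \eqref{Item Lemma: bdd disjoint} directly --- from $1=\|x\|\leq\sum_i|a_i|$ one extracts an index $i_0$ with $|a_{i_0}|\geq 1/n$ and $\alpha_{i_0}\notin\{\beta_j\}_j$, and then $\|x-g\|\geq|a_{i_0}|/\|\p_{\alpha_{i_0}}\|\geq 1/(nM)$ for all $g\in G$ --- and afterwards upgrade to \eqref{Item Lemma: bdd general} by applying the same coefficient-extraction idea to the part of $x$ supported on the indices not shared with $G$; the two routes carry the same content.
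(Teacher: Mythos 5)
Your argument is correct and is essentially the paper's own proof: both reduce (i) to (ii), both write the projection of $x$ onto the complementary block $\sum_{i=k+1}^n \langle\p_{\alpha_i},x\rangle e_{\alpha_i}$ and bound each coordinate by $M\|x-g\|$ using that $\p_{\alpha_i}$ annihilates $G$ for $i>k$. The only cosmetic differences are that the paper picks a minimizing $y\in G$ directly while you take an infimum over arbitrary $g\in G$, and that you spell out the identification $F\cap G=\spn\{e_{\alpha_1},\dots,e_{\alpha_k}\}$ in both directions, of which only the easy inclusion is actually needed.
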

\begin{proof} (\ref{Item Lemma: bdd disjoint}) is a particular case of (\ref{Item Lemma: bdd general}) since the assumption of (\ref{Item Lemma: bdd disjoint}) gives $F\cap G=\{0\}$, so $\dist(x,F\cap G)=\|x\|$. For the proof of (\ref{Item Lemma: bdd general}), assume that, for some $k\geq 0$, $\alpha_1=\beta_1,\dots, \alpha_k=\beta_k$ and $\{\alpha_{k+1},\dots,\alpha_n\} \cap \{\beta_{k+1},\dots,\beta_m\} =\emptyset$. Fix $x\in F$ and let
\begin{equation*}
    \tilde{x}\coloneqq x- \sum_{j=k+1}^n \langle \p_{\alpha_j},x \rangle e_{\alpha_j}= \sum_{j=1}^k \langle \p_{\alpha_j},x \rangle e_{\alpha_j}\in F\cap G.
\end{equation*}
Moreover, taking $y\in G$ with $\dist(x,G)=\|x-y\|$, we can estimate
\begin{equation*}\begin{split}
    \dist(x,F\cap G)& \leq \|x-\tilde{x}\|= \left\|\sum_{j=k+1}^n \langle \p_{\alpha_j},x \rangle e_{\alpha_j}\right\| \leq \sum_{j=k+1}^n |\langle \p_{\alpha_j},x \rangle| \\
    &= \sum_{j=k+1}^n |\langle \p_{\alpha_j},x-y \rangle|\leq nM\cdot \|x-y\|=nM\cdot \dist(x,G).
\end{split}\end{equation*}
\end{proof}

In the proof of Theorem \ref{Th A} we shall need two important results concerning fundamental biorthogonal systems in Banach spaces, which we collect below. 

\begin{theorem}\label{Th: fundamental b.s.} Let $\X$ be a Banach space with a fundamental biorthogonal system $\{e_\alpha;\p_\alpha\}_{\alpha\in\Gamma}$ and let $\Y\coloneqq\spn \{e_\alpha\}_{\alpha\in\Gamma}$. Then
\begin{romanenumerate}
    \item\label{Item Th: bdd system} \cite{HM} there exists a bounded fundamental biorthogonal system $\{e'_\alpha;\p'_\alpha\}_{\alpha\in\Gamma}$ such that $\Y=\spn \{e'_\alpha\}_{\alpha\in\Gamma}$.
    \item\label{Item Th: LUR norm} \cite{T renorming, Moreno fund LUR} $\Y$ admits an LUR norm (which approximates the original norm of $\Y$).
\end{romanenumerate}
\end{theorem}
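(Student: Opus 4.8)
The statement assembles two results from the literature, so the plan is to reduce each clause to its source while recalling the mechanism behind it.

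For (i), the first move is a harmless normalisation: replace $e_\alpha$ by $e_\alpha/\|e_\alpha\|$ and $\p_\alpha$ by $\|e_\alpha\|\,\p_\alpha$. This changes neither $\spn\{e_\alpha\}_{\alpha\in\Gamma}$ nor $\spn\{\p_\alpha\}_{\alpha\in\Gamma}$, and it yields $\|e_\alpha\|=1$, whence automatically $\|\p_\alpha\|\geq 1$; so only an \emph{upper} bound on the functionals remains to be arranged. Note that one cannot simply rescale the $\p_\alpha$ further without either destroying biorthogonality or spoiling the normalisation of the $e_\alpha$: a genuine change of directions is required, and this is exactly the content of \cite{HM}. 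I would therefore invoke that theorem directly; its proof proceeds by a transfinite perturbation scheme which, at each index where $\|\p_\alpha\|$ is too large, corrects $\p_\alpha$ by a combination of finitely many other functionals of the system and compensates by adjusting the corresponding vectors $e_\beta$, all the modifications staying inside $\spn\{e_\alpha\}_{\alpha\in\Gamma}$, so that the linear span is preserved, fundamentality is kept, and the new system is $M$-bounded.

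For (ii), the key structural feature to exploit — and the reason the claim is true even though a generic dense subspace need not admit an LUR norm — is that, after passing to the bounded system of (i), every $y\in\Y$ has \emph{finite} support and satisfies $y=\sum_{\alpha\in\supp y}\langle\p_\alpha,y\rangle e_\alpha$, while the coordinate functionals $\p_\alpha$ are uniformly bounded and separate the points of $\Y$ (indeed, if $\langle\p_\alpha,y\rangle=0$ for all $\alpha$ then $y=0$). Together with the distance estimates of Lemma \ref{Lemma: bounded b.s.}, this is precisely the input that makes a Troyanski-type transfinite construction of an LUR norm go through; in the form needed here it is carried out in \cite{Moreno fund LUR} (drawing on the renorming technique of \cite{T renorming}), and that is what I would cite. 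For the parenthetical approximation statement one either checks that the construction can be fed the prescribed norm of $\Y$ as its starting datum, or, more robustly, one forms for a given $\e>0$ a convex combination of the given norm with a small multiple of the LUR norm just obtained and uses that a convex combination of an equivalent norm with an LUR one is again LUR and $\e$-close to the given norm.

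The main obstacle is clause (ii). The delicate point in the Troyanski-type argument is to obtain, uniformly over all $y$ in the sphere, control of the tails $\|y-P_F y\|$ as $F$ ranges over the finite subsets of $\Gamma$ (where $P_F y=\sum_{\alpha\in F}\langle\p_\alpha,y\rangle e_\alpha$), using only the boundedness constant $M$ and Lemma \ref{Lemma: bounded b.s.}, and then to organise these estimates along a well-ordering of $\Gamma$; this is genuine work and is the reason we quote \cite{Moreno fund LUR} rather than reproving it. Clause (i), by contrast, is purely a matter of invoking \cite{HM}.
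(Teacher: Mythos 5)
Your proposal follows the same route as the paper: both clauses are reductions to cited results, with (i) from \cite{HM} and (ii) from Troyanski's technique as packaged in \cite{Moreno fund LUR}. The one point you gloss over — and the one the paper makes a point of addressing — concerns clause (i). The result in \cite{HM} is stated and proved there for M-bases, i.e.\ for biorthogonal systems that are \emph{total} as well as fundamental, whereas Theorem~\ref{Th: fundamental b.s.} claims it for fundamental systems with no totality hypothesis. Your phrase ``I would therefore invoke that theorem directly'' does not cover this mismatch. The paper's fix is short but genuine: observe that a fundamental biorthogonal system $\{e_\alpha;\p_\alpha\}$ in $\X$ is automatically an M-basis of the (incomplete) normed space $\Y=\spn\{e_\alpha\}$ — totality on $\Y$ is immediate since every nonzero $y\in\Y$ is a finite combination of $e_\alpha$'s, hence some $\p_\alpha$ does not vanish on it — and then note that the construction of \cite{HM} nowhere uses completeness of the ambient space, so it may be applied to $\Y$ itself. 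Without some such remark the citation does not match the statement, and this is worth spelling out.

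For clause (ii) your reduction matches the paper's (which quotes \cite[Lemma~2.1]{Moreno fund LUR} as the explicit formulation of Troyanski's renorming). For the parenthetical approximation the paper just refers to the standard device in \cite[p.~52]{DGZ}, which replaces a norm $\n$ by $\big(\|\cdot\|^2+\e\nn{\cdot}^2\big)^{1/2}$ with $\nn\cdot$ LUR; this $\ell_2$-combination preserves LUR by the usual parallelogram-type estimate and is the safe route. Your alternative, the plain convex combination $(1-\e)\n+\e\nn\cdot$, does in fact produce an LUR norm, but the verification is not a one-liner: from $\|x_n+x\|_{\mathrm{comb}}\to 2$ one first splits the convexity defect between the two summands, then uses the LUR of $\nn\cdot$ (via a reduction to the midpoint along the segment $[x_n,x]$) to get $x_n/\nn{x_n}\to x/\nn{x}$, and only then a short algebraic argument with the two normalisation constraints forces $\nn{x_n}\to\nn{x}$ and hence $x_n\to x$. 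Since you present this as the ``more robust'' option, you should either carry out that argument or switch to the $\ell_2$-combination, which is the one actually covered by the cited reference.
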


The claim in (\ref{Item Th: bdd system}) is stated and proved in \cite{HM} for M-bases only; however an inspection of the argument shows that it holds true for every fundamental biorthogonal system. Perhaps the simplest way to see this is to note that the argument in \cite{HM} never uses the completeness of the space; hence, if $\{e_\alpha;\p_\alpha \}_{\alpha\in\Gamma}$ is a fundamental biorthogonal system in a Banach space $\X$, it is an M-basis for the normed space $\Y\coloneqq\spn \{e_\alpha\}_{\alpha\in\Gamma}$ and we can apply \cite{HM} to the space $\Y$. Earlier partial results due to Plichko can be found in \cite{plichko bdd fund, plichko bbd strong Mbasis}. Part (\ref{Item Th: LUR norm}) is essentially Troyanski's renorming technique \cite{T renorming} (also see \cite[Chapter VII]{DGZ}, or \cite[Theorem 3.48]{HMVZ}); the same type of results can also be found in the literature under the name \emph{Deville's master lemma}, \emph{e.g.}, in \cite{OT1, OT2}. The formulation given here is stated explicitly, \emph{e.g.}, in \cite[Lemma 2.1]{Moreno fund LUR} and some variants of it are also used in \cite{Z, JM, GJM}. The assertion in parentheses concerning the density of LUR norms is standard, \cite[p.~52]{DGZ}. Finally note that Theorem \ref{Th A}(\ref{Th A: C1 LUR}) constitutes an improvement of Theorem \ref{Th: fundamental b.s.}(\ref{Item Th: LUR norm}).

\subsection{Smooth norms via Minkowski functionals} A norm $\n$ on $\X$ is \emph{$C^k$-smooth} if its $k$-th Fr\'echet derivative exists and it is continuous at every point of $\X\setminus\{0\}$ (equivalently, of $\uS_\X$). The norm $\n$ \emph{locally depends on finitely many coordinates} (is LFC, for short) on $\X$ if for each $x\in \uS_\X$ there exist an open neighbourhood $\U$ of $x$ and functionals $\p_1,\dots,\p_k \in\X^*$ such that $\|y\|=\|z\|$ for every $y,z\in\U$ with $\langle\p_j,y \rangle= \langle\p_j,z \rangle$ for every $j=1,\dots,k$.

A \emph{convex body} is a convex set with nonempty interior. A canonical way to build an equivalent norm on a normed space $(\X,\n)$ consists in building a bounded, symmetric convex body $\D$. Then $\D$ induces an equivalent norm on $\X$ via its Minkowski functional $\mu_\D$, defined by $\mu_\D(x)\coloneqq \inf\{t>0\colon x\in t\D\}$. If $\D$ is additionally closed, then the unit ball of $(\X,\mu_\D)$ coincides with $\D$ itself. Moreover, if $(1-\delta)\B_\X \subseteq \D \subseteq \B_\X$ for some $\delta > 0$, then $\n \leq \mu_\D \leq (1-\delta)^{-1} \n$. This approach is ubiquitous in smooth renorming, combined with the following standard lemma, a version of the Implicit Function theorem; the formulation given here follows from \cite[Lemma 5.23]{HJ}. We refer, \emph{e.g.}, to \cite{B boundary, DFH, FHZ lattice, FWZ, HR improving, PWZ} for some instances of uses of this technique.

\begin{lemma}\label{Lemma: implicit mu} Let $(\X,\n)$ be a normed space, $\D\neq\emptyset$ be an open, convex, and symmetric subset of $\X$ and $f\colon \D\to \R$ be even, convex, and continuous. Assume that there is $a>f(0)$ such that $\B\coloneqq\{f\leq a\}$ is bounded and closed in $\X$. If $f$ is $C^k$-smooth for some $k\in\N \cup \{\infty, \omega\}$ (resp.~LFC) on $\D$, then the Minkowski functional $\mu_\B$ of $\B$ is a $C^k$-smooth (resp.~LFC) norm on $\X$.
\end{lemma}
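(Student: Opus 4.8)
The plan is first to note that, since $f$ is even, convex, continuous, and $f(0)<a$, the set $\B=\{f\leq a\}$ is a bounded, symmetric, closed convex body (its interior contains $0$, because $\{f<a\}$ is an open neighbourhood of $0$); hence $\mu_\B$ is an equivalent norm on $\X$ with $\B_{(\X,\mu_\B)}=\B$, and it remains to examine its regularity near an arbitrary point $x_0$ of the sphere $\partial\B=\uS_{(\X,\mu_\B)}$, where $f(x_0)=a$. Because $\D$ is open and $x_0\in\B\subseteq\D$, a neighbourhood $W\subseteq\D$ of $x_0$ is available on which $f$ is $C^k$ (resp.\ LFC), and moreover $x_0/t\in\D$ for all $t$ close to $1$. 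The crucial elementary point is that convexity together with $f(0)<a$ forces $t\mapsto f(x/t)$ to be strictly decreasing near $t=1$: for $0<s<t$ one has $x/t=\tfrac st\,(x/s)+\bigl(1-\tfrac st\bigr)\cdot 0$, so $f(x/t)\leq\tfrac st f(x/s)+\bigl(1-\tfrac st\bigr)f(0)<f(x/s)$ whenever $f(x/s)\geq a>f(0)$. From this and the continuity of $f$ one gets that for every $x$ in a small enough neighbourhood $V$ of $x_0$ there is a \emph{unique} scalar $\tau(x)$ near $1$ with $f\bigl(x/\tau(x)\bigr)=a$; the same estimate shows $x/\tau(x)\in\partial\B$, i.e.\ $\mu_\B(x)=\tau(x)$ on $V$. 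Thus the behaviour of $\mu_\B$ near $x_0$ is exactly that of the implicitly defined $\tau$, and this identification $\mu_\B=\tau$ — rather than any smoothness computation — is the part where all three hypotheses on $f$ are used and which I expect to require the most care.

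For the $C^k$-smoothness assertion, I would apply the Implicit Function Theorem to $\Phi(x,t)\coloneqq f(x/t)-a$, which is $C^k$ on $V\times(1-\delta,1+\delta)$ and satisfies $\Phi(x_0,1)=0$ and $\partial_t\Phi(x_0,1)=-\langle f'(x_0),x_0\rangle\neq 0$; the non-vanishing holds because convexity of $f$ gives $\langle f'(x_0),x_0\rangle\geq f(x_0)-f(0)=a-f(0)>0$. Hence $\tau$, and so $\mu_\B$, is $C^k$ near $x_0$, and since $\mu_\B$ is positively homogeneous it is then $C^k$ on $\X\setminus\{0\}$. This is precisely the statement of \cite[Lemma 5.23]{HJ}, which also covers the case $k=\omega$ via the analytic Implicit Function Theorem; so for this clause I would simply invoke that lemma.

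For the LFC case, assume there are a neighbourhood $W\subseteq\D$ of $x_0$ and functionals $\p_1,\dots,\p_m\in\X^*$ such that $f(y)$ depends only on $\langle\p_1,y\rangle,\dots,\langle\p_m,y\rangle$ for $y\in W$. Shrinking $V$ so that $x/t\in W$ for all $x\in V$ and all $t$ near $1$, the value $f(x/t)$ depends only on $\langle\p_1,x\rangle/t,\dots,\langle\p_m,x\rangle/t$, hence only on $\bigl(\langle\p_i,x\rangle\bigr)_{i=1}^m$ and $t$. Since $\mu_\B(x)=\tau(x)$ is the unique solution of $f\bigl(x/\tau(x)\bigr)=a$, it follows that $\mu_\B$ restricted to $V$ depends only on $\langle\p_1,\cdot\rangle,\dots,\langle\p_m,\cdot\rangle$; that is, $\mu_\B$ is LFC at $x_0$. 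As $x_0\in\uS_{(\X,\mu_\B)}$ was arbitrary, $\mu_\B$ is LFC, which finishes the argument.
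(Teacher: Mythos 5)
Your proof is correct; note that the paper itself gives no argument for this lemma but simply remarks that it follows from \cite[Lemma 5.23]{HJ}. Your direct derivation --- locally equating $\mu_\B$ with the implicit solution $\tau(x)$ of $f(x/t)=a$ via the Implicit Function Theorem (with $\partial_t\Phi(x_0,1)=-\langle f'(x_0),x_0\rangle\leq f(0)-a<0$ guaranteed by the convexity subgradient inequality), and then transporting the LFC structure through the implicit dependence of $\tau$ on $\bigl(\langle\p_i,\cdot\rangle\bigr)_{i=1}^m$ --- is exactly the standard argument underlying the cited lemma, and all its steps are sound.
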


We will also need the following folklore lemma, a variation of the above result.

\begin{lemma}\label{Lemma: LFC} Let $\D$ be a bounded, symmetric, convex body in a normed space $\X$. Assume that for every $x\in \partial \D$ there are a neighbourhood $\U$ of $x$ and functionals $\p_1,\dots,\p_n\in\X^*$ such that
\begin{equation}\label{eq: Lemma LFC}
    \forall y\in\U\colon \quad y\in \D \,\Longleftrightarrow\, \langle \p_i,x \rangle\leq 1,\ \forall i=1,\dots, n.
\end{equation}
Then $\mu_\D$ is an LFC norm.
\end{lemma}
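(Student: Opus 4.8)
The plan is to derive this from Lemma \ref{Lemma: implicit mu} by locally identifying $\D$ with a sublevel set of a max of finitely many affine functions. First, fix $x \in \partial \D$ and let $\U$, $\p_1, \dots, \p_n$ be as in the hypothesis \eqref{eq: Lemma LFC}. The natural candidate for the local convex function is $f(y) \coloneqq \max_{1 \le i \le n} \langle \p_i, y\rangle$, which is convex, continuous, and obviously LFC (on a neighbourhood of any point it agrees with one of the $\p_i$, or rather with the max over the subset of indices active near that point — in any case it depends on at most $n$ coordinates locally). The hypothesis says precisely that on $\U$ the body $\D$ coincides with $\{f \le 1\}$.

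The main issue is that this $f$ is only defined and only describes $\D$ \emph{locally}, whereas Lemma \ref{Lemma: implicit mu} wants a globally defined convex function whose sublevel set equals $\D$. So the real work is a gluing/covering argument. Since $\partial \D$ need not be compact, I would instead argue directly that $\mu_\D$ is LFC at every point of $\uS_{(\X,\mu_\D)} = \partial \D$, without producing a single global $f$. Fix $x \in \partial\D$ with the associated $\U, \p_1,\dots,\p_n$. By shrinking $\U$ we may assume $\U$ is an open ball and, using that $0$ is an interior point of $\D$ (so $\mu_\D$ is continuous and $\mu_\D(x) = 1$ with $x \ne 0$), we may further arrange that the cone $\{ tz : t > 0,\ z \in \U\}$ — or a suitable truncated version of it — behaves well. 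The key geometric point: for $z$ in a small enough neighbourhood $\V$ of $x$, the value $\mu_\D(z)$ is determined by following the ray through $z$ until it crosses $\partial \D$, and near $x$ that crossing is governed by \eqref{eq: Lemma LFC}, i.e. by when $\max_i \langle \p_i, \cdot\rangle$ hits $1$. Concretely, I expect to show that for $z$ near $x$,
\[
  \mu_\D(z) = \max_{1 \le i \le n} \langle \p_i, z \rangle,
\]
by checking both inequalities: if $\max_i \langle \p_i, z\rangle = \lambda$ then $z/\lambda$ satisfies $\langle \p_i, z/\lambda\rangle \le 1$ for all $i$ and lies in $\U$ (for $z$ close to $x$, since then $\lambda$ is close to $1$), hence $z/\lambda \in \D$, giving $\mu_\D(z) \le \lambda$; conversely if $\mu_\D(z) = \mu < \lambda$ then $z/\mu \in \D$ and, again for $z$ near $x$, $z/\mu \in \U$, so $\langle \p_i, z/\mu\rangle \le 1$ for all $i$, i.e. $\lambda = \max_i\langle \p_i, z\rangle \le \mu$, a contradiction. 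This requires knowing that the relevant dilates of $z$ stay inside $\U$, which follows from continuity of $\mu_\D$ and of the scalar maps $z \mapsto z/\mu_\D(z)$ at $x$, after shrinking $\V$.

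Once this local formula $\mu_\D = \max_i \langle \p_i, \cdot\rangle$ on $\V$ is established, the LFC property of $\mu_\D$ at $x$ is immediate: on a small enough ball around $x$ the function $\max_i \langle \p_i, \cdot \rangle$ depends only on the finitely many coordinates $\p_1, \dots, \p_n$ (indeed, it is LFC as a max of finitely many continuous LFC functions — see the discussion around \cite[Lemma 5.23]{HJ}), so $\mu_\D$ inherits this. Since $x \in \partial\D$ was arbitrary and $\partial \D = \uS_{(\X,\mu_\D)}$, the norm $\mu_\D$ is LFC on all of $\X$, as required. The one point needing a little care — hence the "main obstacle" — is making the passage from "$\D$ agrees with $\{\max_i \langle \p_i,\cdot\rangle \le 1\}$ on $\U$" to "$\mu_\D$ agrees with $\max_i \langle \p_i, \cdot \rangle$ on a neighbourhood of $x$" fully rigorous, i.e. the uniform control ensuring the rescaled points $z/\mu_\D(z)$ do not escape $\U$; this is a routine but genuine continuity argument exploiting that $0 \in \operatorname{int}\D$ and $x \notin \{0\}$.
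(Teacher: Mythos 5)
Your proposal is correct, modulo the continuity bookkeeping that you yourself flag, and it takes a route that is genuinely different from (and somewhat stronger than) the paper's. The paper does not establish a local formula for $\mu_\D$; it verifies the LFC condition directly by a contradiction argument: if $y,z$ lie in a suitable shrunken neighbourhood $\mathcal{W}$ of $x$, agree on $\p_1,\dots,\p_n$, but satisfy $\mu_\D(y)<\mu_\D(z)$, one picks a common scale $s$ strictly between the two values; then $s^{-1}y\in\D\cap\U$ while $s^{-1}z\in\U\setminus\D$, yet the two rescaled points still agree on every $\p_i$, contradicting the hypothesis \eqref{eq: Lemma LFC}. Your argument instead proves that $\mu_\D(z)=\max_{1\le i\le n}\langle\p_i,z\rangle$ on a neighbourhood of $x$, from which LFC is immediate; this is a stronger local conclusion, and it is a perfectly sound way to finish. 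The extra ingredient your route needs, which the paper sidesteps, is the observation that $\max_i\langle\p_i,x\rangle=1$: this is what makes $\lambda=\max_i\langle\p_i,z\rangle$ close to $1$ for $z$ near $x$, so that $z/\lambda$ and $z/\mu'$ stay inside $\U$. It does follow from the hypothesis (for $t>1$ close to $1$ the point $tx\in\U\setminus\D$, so some $\langle\p_{i_0},x\rangle\geq1/t$, and passing to a subsequence with a fixed $i_0$ and letting $t\to1^+$ gives $\langle\p_{i_0},x\rangle\geq1$, while $\langle\p_i,x\rangle\leq1$ for all $i$ since $x\in\overline\D\cap\U$), but you should state it explicitly. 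The paper avoids this by never normalising against $\max_i\langle\p_i,\cdot\rangle$: it only compares $y$ and $z$ rescaled by the \emph{same} factor $s$, so it never needs to know where on $[0,1]$ the value $\max_i\langle\p_i,x\rangle$ sits. In terms of technical weight the two proofs are essentially equivalent — both need the same ``dilates stay in $\U$'' neighbourhood-shrinking step (the paper's $\V$ and $\mathcal{W}$) — but yours delivers a cleaner geometric picture at the cost of one extra small lemma.
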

\begin{proof} Pick any $x\in \X$ with $\mu_\D(x)= 1$, namely $x\in \partial \D$. Let a neighbourhood $\U$ of $x$ and functionals $\p_1,\dots,\p_n\in\X^*$ be as in the statement of the lemma. There are an open neighbourhood $\V$ of $x$ and $\e>0$ such that $s^{-1}\cdot y\in\U$ for every $y\in\V$ and every $s\in \R$ with $1-\e\leq s\leq 1+\e$. Moreover, the continuity of $\mu_\D$ yields the existence of a neighbourhood $\mathcal{W}$ of $x$, $\mathcal{W}\subseteq \V$, such that $1-\e\leq \mu_\D(y)\leq 1+\e$ for every $y\in \mathcal{W}$. 

We claim that the neighbourhood $\mathcal{W}$ of $x$ and the functionals $\p_1,\dots,\p_n$ witness that $\mu_\D$ satisfies the LFC condition at $x$. Indeed, towards a contradiction, assume that there are $y,z\in \mathcal{W}$ such that $\langle \p_i,y \rangle=\langle \p_i,z\rangle$ for $i=1,\dots,n$, but $\mu_\D(y)<\mu_\D(z)$. Pick $s\in\R$ with $\mu_\D(y)<s<\mu_\D(z)$; then $1-\e\leq s\leq 1+\e$, by definition of $\mathcal{W}$. The definition of $\V$ now yields that $s^{-1}\cdot y, s^{-1}\cdot z\in \U$. Moreover, the definition of $\mu_\D$ gives $s^{-1}\cdot y\in \D$ and $s^{-1}\cdot z\notin \D$. Hence, by (\ref{eq: Lemma LFC}), we derive that there is $i_0\in\{1,\dots,n\}$ such that $\langle\p_{i_0}, s^{-1}\cdot z\rangle>1$, while $\langle\p_i, s^{-1}\cdot y\rangle\leq1$ for each $i=1,\dots,n$. However, this contradicts the fact that $\langle \p_{i_0}, y\rangle=\langle \p_{i_0}, z\rangle$ and concludes the proof.
\end{proof}

\section{Proof of the main result}\label{Sec: main proof}
The goal of this section is the proof of the core parts of our main result, items (\ref{Th A: poly}) and (\ref{Th A: smooth}) of Theorem \ref{Th A}. Before diving into the details of the proof, let us present here some of the main ideas involved in the argument.

Let $\Y\coloneqq\spn\{e_\alpha\}_{\alpha\in\Gamma}$ be the linear span of the fundamental biorthogonal system and $F\coloneqq\spn\{e_{\alpha_1},\dots, e_{\alpha_n}\}$ be a finite-dimensional subspace. By Theorem \ref{Th: fundamental b.s.}(\ref{Item Th: LUR norm}), we can assume that the norm on $\Y$ is LUR, so we can cover the unit sphere of $F$ with finitely many open slices of $\B_\Y$ with arbitrary small diameters. By removing such slices from $\B_\Y$ we end up with a convex body $\P_F$ whose intersection with $F$ is a polyhedron. After having performed such a construction in the single subspace $F$, one would like to `glue together' all such convex sets $\P_F$ in order to get the desired polyhedral norm on $\Y$. When trying to implement this idea, we face two main difficulties. The first one is that, if $G\coloneqq\spn\{e_{\beta_1},\dots, e_{\beta_k}\}$ is a subspace of $F$, the polyhedron $\P_F\cap F$ corresponding to $F$ intersected with $G$ should coincide with $\P_G\cap G$. In order to solve this, when choosing the slices in $F$, we only add those that have small intersection with the slices coming from proper subspaces of $F$. Moreover, the diameter of these `new' slices is much smaller than the one of the slices constructed before. This inductive construction will be carried out in Step \ref{Step: slices}. The second, and main, difficulty is that the slices corresponding to some other subspace $G\coloneqq\spn\{e_{\beta_1},\dots, e_{\beta_k}\}$ (now, not necessarily contained in $F$) could intersect $F$. Since this could happen for infinitely many of such subspaces $G$, there would be no way to assure that the ball of $F$ remains a polyhedron. The rather delicate choice of the slices in Step \ref{Step: slices} is justified by the need to circumvent such a problem. Indeed, in Step \ref{Step: compatible} we show that there even exists a neighbourhood of $\B_F$, that we will denote by $T(F,\theta_F/2)$, that `protects' $F$, in the sense that the other slices do not intersect such a neighbourhood. The existence of this neighbourhood will be the crucial ingredient in the proof of the LFC and for smoothing the norm in Steps \ref{Step: polyhedral norm} and \ref{Step: smooth norm} respectively.

\begin{proof}[Proof of Theorem \ref{Th A}(\ref{Th A: poly}) and (\ref{Th A: smooth})] Let $\X$ be a Banach space with a fundamental biorthogonal system $\{e_\alpha;\p_\alpha\}_{\alpha\in \Gamma}$, set $\Y\coloneqq\spn\{e_\alpha\}_{\alpha\in\Gamma}$, and choose any equivalent norm $\n$ on $\Y$. Theorem \ref{Th: fundamental b.s.}(\ref{Item Th: bdd system}) yields that $\Y$ is the linear span of a bounded (fundamental) biorthogonal system, hence it allows us to assume that $\{e_\alpha;\p_\alpha\}_{\alpha\in \Gamma}$ is bounded. Our task consists in building two norms, the former polyhedral and LFC, the latter $C^\infty$-smooth and LFC, that approximate $\n$. By Theorem \ref{Th: fundamental b.s.}(\ref{Item Th: LUR norm}) there exists an LUR norm on $\Y$ that approximates $\n$; hence, we can (and do) assume that $\n$ is already LUR on $\Y$. Finally, up to rescaling we can also assume that $\|e_\alpha\|=1$, $\|\p_\alpha\|\leq M$ ($\alpha\in\Gamma$), for some $M\geq 1$.\smallskip

We start by fixing one piece of notation. Denote, for $n\in\N$,
\begin{equation*}
    \Fn\coloneqq \big\{ \spn\{e_{\alpha_1},\dots, e_{\alpha_n}\}\colon \alpha_1,\dots,\alpha_n\in\Gamma,\, \alpha_i\neq \alpha_j \mbox{ for } i\neq j \big\}
\end{equation*}
\begin{equation*}
    \F\coloneqq\bigcup_{n\in\N} \Fn.
\end{equation*}
Note that $\F$ is a directed set by inclusion. Moreover, for $F\in\F$ and $\theta>0$, we consider the open `tubular' neighbourhood of $F$ in $\B_\Y$ defined by
\begin{equation*}
    T(F,\theta)\coloneqq\{y\in \B_\Y\colon \dist(y,F)<\theta\}.
\end{equation*}

\begin{step}\label{Step: slices} Construction of the slices.
\end{step}

We start by building a collection of slices parametrised by $\F$ that satisfy the conditions in the following claim.
\begin{claim}\label{Claim: slices} Let $\e>0$ be fixed. Then there are nets $(\e_F)_{F\in\F}$ and $(\theta_F)_{F\in\F}$ of positive reals and sets $(\Omega_F)_{F\in\F}$, where $\Omega_F$ is a finite set of slices of $\B_\Y$, such that:
\begin{romanenumerate}
    \item\label{Claim: i)} $\theta_F\leq \e_F\leq \e$ for every $F\in\F$,
    \item\label{Claim: ii)} $\e_F\leq \frac{1}{4nM}$ for every $F\in\Fn$,
    \item\label{Claim: iii)} $\e_F\leq \frac{1}{4nM} \theta_G$ for every $F\in\Fn$ and every $G\in\F$ with $G\subsetneq F$,
    \item\label{Claim: iv)} if $S\in\Omega_F$, then $-S \in\Omega_F$ as well,
    \item\label{Claim: v)} $\diam(S)<\e_F$ for every $S \in\Omega_F$,
    \item\label{Claim: vi)} if $S\in\Omega_F$, then $S$ is of the form $S= S(x,\psi,\delta)$, for some $\delta >0$, some functional $\psi\in \uS_{\Y^*}$ that is norming for $x$, where
	\begin{equation*}
	x \in \uS_F \setminus \bigcup_{\substack{G\in\F\\ G \subsetneq F}} T(G,\theta_G),
	\end{equation*}
    \item\label{Claim: vii)} Setting, for $F\in\F$, 
	\begin{equation*}
	\U_F \coloneqq \bigcup_{\substack{G\in\F \\ G \subseteq F}} \bigcup_{S \in \Omega_G} S,
	\end{equation*}
    we have $2\theta_F\leq  \dist (\uS_F, \B_\Y \setminus \U_F)$. In particular, $\uS_\Y \cap T(F, \theta_F) \subseteq \U_F$.
\end{romanenumerate}
\end{claim}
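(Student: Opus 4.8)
The plan is to construct the nets $(\e_F)$, $(\theta_F)$ and the slice-systems $(\Omega_F)$ by a simultaneous transfinite/inductive recursion over $\F$, organised by the rank function $F\mapsto n$ with $F\in\Fn$, and — within a fixed $n$ — proceeding so that all data for proper subspaces of $F$ are already available when we treat $F$. Concretely, I would first fix, for each $F\in\Fn$, a real $\e_F$ small enough to satisfy (\ref{Claim: i)})--(\ref{Claim: iii)}) at once: take $\e_F\le\e$, $\e_F\le\frac1{4nM}$, and $\e_F\le\frac1{4nM}\min\{\theta_G:G\in\F,\ G\subsetneq F\}$, where the minimum is over the finitely many subspaces $G$ strictly contained in $F$ (there are finitely many since $F$ is $n$-dimensional and spanned by basis vectors, so its subspaces generated by subfamilies of $\{e_{\alpha_i}\}$ number at most $2^n$) — and the $\theta_G$'s are already defined by the induction. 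This also fixes $\e_F$ independently of the slices, which is convenient.

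Next, with $\e_F$ in hand, I would build $\Omega_F$. The set $K_F\coloneqq \uS_F\setminus\bigcup_{G\subsetneq F}T(G,\theta_G)$ is a (possibly empty) compact subset of $\uS_F$. For each $x\in K_F$, LUR-ness of $\n$ together with Fact~\ref{Fact: small slice} (applied with any norming functional $\psi_x$ for $x$, which exists and attains its norm since we may pick $\psi_x$ from the finite-dimensional predual considerations — more carefully, $x\in\uS_\Y$ has a norming functional $\psi_x\in\uS_{\Y^*}$, and one must check it attains its norm at $x$, which is automatic by definition of ``norming functional'' here) gives $\delta_x>0$ with $\diam S(x,\psi_x,\delta_x)<\e_F$; shrinking $\delta_x$ further and using Fact~\ref{Fact: continuity slice} if one wants the diameter strictly below a prescribed value, I get an open cover $\{S(x,\psi_x,\delta_x)^{\mathrm o}\cap\uS_\Y\}_{x\in K_F}$ (interiors relative to $\uS_\Y$, or just use that slices are relatively open neighbourhoods of $x$ in $\uS_\Y$ — this needs $\psi_x(x)=\|x\|$ strict on the slice) of the compact set $K_F$, extract a finite subcover indexed by $x_1,\dots,x_r\in K_F$, and set $\Omega_F\coloneqq\{\pm S(x_i,\psi_{x_i},\delta_{x_i}):i=1,\dots,r\}$. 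This secures (\ref{Claim: iv)}), (\ref{Claim: v)}) and (\ref{Claim: vi)}) by construction. If $K_F=\emptyset$ we simply put $\Omega_F=\emptyset$.

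Finally I would choose $\theta_F$. Having $\Omega_G$ for all $G\subseteq F$, the set $\U_F=\bigcup_{G\subseteq F}\bigcup_{S\in\Omega_G}S$ is a finite union of slices and is a relatively open subset of $\B_\Y$ containing $\uS_F$ — the containment is the key point: for $y\in\uS_F$, either $y\in T(G,\theta_G)$ for some $G\subsetneq F$, hence $y$ is close to $\uS_G$ (distance $<\theta_G<1$, so $y/\|\cdot\|$-projection...) and inductively $y\in\U_G\subseteq\U_F$ up to the small adjustment, or else $y\in K_F$ and then $y$ lies in one of the slices of $\Omega_F$ by the covering property. Making this ``$\uS_F\subseteq\U_F$'' precise — including dealing with the normalisation, i.e.\ passing between $\uS_F$, $T(G,\theta_G)$ and $\uS_G$ — is where I expect to spend real care, and it is essentially the heart of the claim. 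Once $\uS_F\subseteq\U_F$ with $\U_F$ relatively open in $\B_\Y$, compactness of $\uS_F$ gives $\dist(\uS_F,\B_\Y\setminus\U_F)>0$, and I set $\theta_F$ to be any positive real with $\theta_F\le\e_F$ and $2\theta_F\le\dist(\uS_F,\B_\Y\setminus\U_F)$; this is (\ref{Claim: vii)}) and (\ref{Claim: i)}), and the ``in particular'' clause follows since $y\in\uS_\Y\cap T(F,\theta_F)$ means $\dist(y,\uS_F)<2\theta_F$ (roughly — again a normalisation estimate, bounding $\dist(y,\uS_F)$ by a constant times $\dist(y,F)$ on the unit sphere), hence $y\in\U_F$. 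The main obstacle, to repeat, is bookkeeping the interplay between the ``tubes'' $T(G,\theta_G)$, the spheres $\uS_G$, and the normalisation map, so that the inductive statement ``$\uS_F\subseteq\U_F$'' genuinely closes up; everything else is routine compactness plus the two Facts on slices. One should also note the recursion is well-founded: $\e_F$ depends only on $\{\theta_G:G\subsetneq F\}$, $\Omega_F$ depends on $\e_F$ and $\{\theta_G:G\subsetneq F\}$, and $\theta_F$ depends on $\{\Omega_G:G\subseteq F\}$ and $\e_F$, so processing $\Fn$ in increasing $n$, and within each $\Fn$ in any order (the only within-level dependence, via $\U_F$, is on $\Omega_F$ itself and on strictly smaller $G$), is consistent.
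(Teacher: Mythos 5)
Your proposal is correct and is essentially the paper's proof: induct on $\dim F$, cover a suitable compact subset of $\uS_F$ by slices of diameter $<\e_F$, then pick $\theta_F$ at the end by compactness. The one difference is the compact set you cover. You cover $K_F\coloneqq\uS_F\setminus\bigcup_{G\subsetneq F}T(G,\theta_G)$, which makes clause (\ref{Claim: vi)}) automatic but leaves the inclusion $\uS_F\subseteq\U_F$ to be checked from the inductive hypothesis; the paper instead covers the (in general smaller) set $\uS_F\setminus\V$ with $\V\coloneqq\bigcup_{G\subsetneq F}\bigcup_{S\in\Omega_G}S$, which makes $\uS_F\subseteq\U_F$ automatic and pushes the use of the inductive hypothesis into verifying (\ref{Claim: vi)}). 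Either order is fine; the ingredient that closes the loop is in both cases the second (``in particular'') clause of (\ref{Claim: vii)}) for proper subspaces. On that point, your ``normalisation'' worry is a red herring: for $y\in\uS_F$ with $y\in T(G,\theta_G)$ and $G\subsetneq F$, you need not project $y$ onto $\uS_G$ at all, because the second clause of (\ref{Claim: vii)}) is part of the inductive hypothesis for $G$, and since $y\in\uS_\Y$ it gives directly $y\in\uS_\Y\cap T(G,\theta_G)\subseteq\U_G\subseteq\U_F$. The only place a projection estimate actually enters is, as you note at the very end, in showing that the first clause of (\ref{Claim: vii)}) implies the second; the paper dispatches this once, before the induction starts, precisely so that it is then available at every step of the recursion without being re-derived.
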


\begin{proof}[Proof of Claim \ref{Claim: slices}] \renewcommand\qedsymbol{$\square$} Without loss of generality, we assume that $\e<1$. We start by proving that the second clause in (\ref{Claim: vii)}) indeed follows from the first part. Pick any $x\in\uS_\Y\cap T(F,\theta_F)$ and find $w\in F$ with $\|x-w\|<\theta_F$. Since $\|x\|=1$, we have $\left|\|w\|-1\right|<\theta_F$; hence, the vector $\tilde{w}\coloneqq \frac{w}{\|w\|}$ satisfies $\|w-\tilde{w}\|<\theta_F$. Thus $\|\tilde{w}-x\|< 2\theta_F \leq \dist(\uS_F,\B_\Y \setminus \U_F)$. This inequality and $\tilde{w}\in\uS_F$ imply $x\in\U_F$, as desired.\smallskip

We now build $(\e_F)_{F\in\F}$, $(\theta_F)_{F\in\F}$, and $(\Omega_F)_{F\in\F}$ with the above properties and we argue by induction on $n\coloneqq\dim F$ (where $F \in \F$). To check the statement for $n=1$, pick any $F\in \mathcal{F}^1$, namely $F=\spn\{e_\alpha\}$, for some $\alpha\in\Gamma$. Set $\e_F\coloneqq\min\{\e,\frac{1}{4M}\}$ and let $\psi_\alpha\in\uS_{\Y^*}$ be a norming functional for $e_\alpha$. By Fact \ref{Fact: small slice} there is $\delta_\alpha$ such that the slice $S(e_\alpha, \psi_\alpha, \delta_\alpha)$ has diameter smaller than $\e_F$. Let $\Omega_F\coloneqq\{\pm S(e_\alpha, \psi_\alpha, \delta_\alpha)\}$ and $\U_F\coloneqq S(e_\alpha, \psi_\alpha, \delta_\alpha) \cup -S(e_\alpha, \psi_\alpha, \delta_\alpha)$. Then $\uS_F\subseteq \U_F$, so $\dist(\uS_F, \B_\Y\setminus \U_F)>0$ and we can finally choose $\theta_F\leq \min\{\e_F,\frac{1}{2}\dist(\uS_F, \B_\Y\setminus \U_F)\}$. With this construction, all conditions (\ref{Claim: i)})--(\ref{Claim: vii)}) are clearly satisfied. \smallskip

Now fix $n\geq2$ and assume inductively that $\e_F$, $\theta_F$, and $\Omega_F$ have already been defined for every $F\in\F$ with $\dim F\leq n-1$ and satisfy (\ref{Claim: i)})--(\ref{Claim: vii)}). Fix $F\in\Fn$ arbitrarily. There are only finitely many $G\in\F$ with $G\subsetneq F$ and $\theta_G$ has already been defined for each such $G$; hence, we can choose $\e_F>0$ such that 
\begin{equation*}
    \e_F\leq \frac{1}{4nM} \min\{\theta_G\colon G\in\F, G\subsetneq F\}.
\end{equation*}
This gives conditions (\ref{Claim: iii)}) and (\ref{Claim: ii)}), since $\theta_G\leq\e_G\leq \e<1$. Consider now the set 
\begin{equation*}
    \V\coloneqq \bigcup_{\substack{G\in\F\\ G \subsetneq F}} \bigcup_{S\in\Omega_G} S,
\end{equation*}
pick $x\in\uS_F\setminus \V$ and a norming functional $\psi_x$ for $x$. By Fact \ref{Fact: small slice} there exists $\delta_x>0$ such that $\diam (S(x,\psi_x, \delta_x))<\e_F$. The collection $\left\{S(x,\psi_x, \delta_x)\colon x\in\uS_F\setminus\V \right\}$ is an open cover of the compact set $\uS_F\setminus\V$, so we can extract a finite subcover $\left\{S(x_j,\psi_{x_j}, \delta_{x_j})\right\}_{j=1}^k$ of $\left\{S(x,\psi_x, \delta_x)\colon x\in\uS_F\setminus\V \right\}$. We set $\Omega_F\coloneqq \left\{\pm S(x_j,\psi_{x_j}, \delta_{x_j}) \right\}_{j=1}^k$. Then conditions (\ref{Claim: iv)}) and (\ref{Claim: v)}) are satisfied. (\ref{Claim: vi)}) is satisfied as well because 
\begin{equation*}
    \uS_\Y \cap \bigcup_{\substack{G\in\F\\ G \subsetneq F}} T(G,\theta_G) \subseteq \bigcup_{\substack{G\in\F\\ G \subsetneq F}} \bigcup_{S\in\Omega_G} S =\V
\end{equation*}
by (\ref{Claim: vii)}) of the inductive assumption.

Finally, we define $\U_F$ as in (\ref{Claim: vii)}); by construction $\U_F$ is an open subset of $\B_\Y$ that contains $\uS_F$. Hence, the closed sets $\uS_F$ and $\B_\Y\setminus \U_F$ are disjoint and, thus, they have positive distance ($\uS_F$ being compact). So, we can choose $\theta_F>0$ such that
\begin{equation*}
    \theta_F\leq \min\left\{\e_F,\frac{1}{2}\dist(\uS_F, \B_\Y\setminus \U_F)\right\}.
\end{equation*}
This yields conditions (\ref{Claim: i)}) and (\ref{Claim: vii)}) and concludes the inductive step.
\end{proof}

\begin{step}\label{Step: compatible} The convex bodies $\P_F$ and the compatibility condition.
\end{step}

Next, we use the family of slices from the previous step to build, for every $F\in\F$, a convex body $\P_F$. We prove the crucial fact (see Fact \ref{Fact: compatibility} below) that this construction is compatible, in the sense that the construction of $\P_F$ does not interfere with the one of $\P_G$, for any $F\neq G\in\F$.

For every $F\in\F$ and every $n\in\N$ we define
\begin{equation}\label{eq: def P_F}
    \P_F\coloneqq \B_\Y\setminus \bigcup_{\substack{G\in\F \\ G \subseteq F}} \bigcup_{S\in\Omega_G} S
\end{equation}
\begin{equation}\label{eq: def P_n}
    \P_n\coloneqq \bigcap_{F\in\Fn} \P_F = \bigcap_{\substack{G\in\F \\ \dim G \leq n}} \bigcap_{S\in\Omega_G} \B_\Y\setminus S.
\end{equation}

Let us note here the following monotonicity properties: $\P_F\subseteq \P_G$ whenever $F,G\in\F$, $G\subseteq F$. Hence, $\P_{n+1}\subseteq \P_n$ for $n\geq1$. We shall discuss further properties of such sets at the beginning of Step \ref{Step: polyhedral norm}, while we now turn to the main property of the construction.

\begin{fact}[Compatibility]\label{Fact: compatibility} For every $F\in \F$ and every $n\in\N$ with $\dim F\leq n$ we have
\begin{equation}\label{eq: compatibility}\tag{$\dagger$}
    \P_n \cap T(F,\theta_F/2) = \P_F \cap T(F,\theta_F/2).
\end{equation}
\end{fact}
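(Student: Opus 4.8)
The plan is to prove the two inclusions in \eqref{eq: compatibility} separately, and since $\P_n = \bigcap_{\dim G \leq n} \P_G \subseteq \P_F$ (as $\dim F \leq n$), the inclusion $\P_n \cap T(F,\theta_F/2) \subseteq \P_F \cap T(F,\theta_F/2)$ is immediate from the monotonicity already recorded. So the whole content is the reverse inclusion: I must show that if $y \in \P_F \cap T(F,\theta_F/2)$, then $y \in \P_n$, i.e.\ $y$ is removed by no slice $S \in \Omega_G$ with $\dim G \leq n$. By definition of $\P_F$ in \eqref{eq: def P_F}, being in $\P_F$ already rules out all slices coming from subspaces $G \subseteq F$; what remains is to rule out slices coming from subspaces $G$ with $\dim G \leq n$ that are \emph{not} contained in $F$. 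This is exactly the ``second difficulty'' flagged in the overview, and it will be the main obstacle.

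So fix $G \in \F$ with $\dim G = m \leq n$, $G \not\subseteq F$, and a slice $S = S(x,\psi,\delta) \in \Omega_G$; I want to show $S \cap T(F,\theta_F/2) = \emptyset$, which finishes the proof. The key geometric point is a distance estimate. On one hand, by Claim \ref{Claim: slices}\eqref{Claim: v)}, $\diam(S) < \e_G$, and $S$ contains the point $x \in \uS_G$ (since $\psi$ is norming for $x$ and $\delta>0$, $x \in S$); hence every point of $S$ lies within $\e_G$ of $x$. On the other hand, I will bound $\dist(x, F)$ from below. Here I split into cases according to whether $\dim G \leq \dim F$ or $\dim G > \dim F$, because the biorthogonality estimate Lemma \ref{Lemma: bounded b.s.} and the ``small $\e_F$ for larger subspaces'' condition \eqref{Claim: iii)} are designed to cover complementary situations.

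If $\dim G \le \dim F$ is false, i.e.\ $m > n \ge \dim F$ — wait, that can't happen since $m \le n$; the real dichotomy is whether $G \cap F \subsetneq G$ forces $G$ to be ``new'' relative to some chain, so let me instead argue directly. Since $G \not\subseteq F$, pick a basis vector $e_\beta$ of $G$ with $e_\beta \notin F$; more usefully, consider $H \coloneqq \spn(\{e_\gamma\}\text{-generators of } F \cap G)$, so $H = F \cap G \subsetneq G$ and $\dim H \le \dim F - $ well, $\dim H \le n$. By Claim \ref{Claim: slices}\eqref{Claim: vi)}, the center $x$ of $S$ satisfies $x \in \uS_G \setminus \bigcup_{H' \subsetneq G} T(H', \theta_{H'})$, and in particular $x \notin T(H, \theta_H)$ where $H = F \cap G \subsetneq G$, so $\dist(x, F \cap G) \ge \theta_H$. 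Now Lemma \ref{Lemma: bounded b.s.}\eqref{Item Lemma: bdd general}, applied with the roles of $F$ (there) played by $G$ and $G$ (there) by $F$, gives $\dist(x, G \cap F) \le mM \cdot \dist(x, F)$, whence $\dist(x,F) \ge \frac{\theta_H}{mM} \ge \frac{\theta_H}{nM}$. (When $F \cap G = \{0\}$ one instead uses Lemma \ref{Lemma: bounded b.s.}\eqref{Item Lemma: bdd disjoint} to get $\dist(x,F) \ge \tfrac1{mM}\ge\tfrac1{nM}$, and we set $\theta_H$-type quantity $=1$ in what follows; this is the easy sub-case.) It remains to see that this lower bound beats $\e_G + \theta_F/2$: any $y \in S \cap T(F,\theta_F/2)$ would satisfy $\dist(x,F) \le \|x - y\| + \dist(y,F) < \e_G + \theta_F/2$. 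To derive a contradiction I use \eqref{Claim: iii)} with the chain $H \subsetneq G$: since $H = F\cap G \subsetneq G$ and $\dim G = m \le n$, condition \eqref{Claim: iii)} gives $\e_G \le \frac{1}{4mM}\theta_H \le \frac{1}{4nM}\theta_H \le \frac14 \dist(x,F)$; and $\theta_F/2 \le \e_F/2 \le \frac14 < $ — here I need $\theta_F/2 < \frac12\dist(x,F)$, which again follows since $\theta_F \le \e_F$ and, chasing the normalisation $\dist(x,F)\ge \theta_H/(nM)$ together with \eqref{Claim: i)}–\eqref{Claim: iii)}, the quantity $\theta_F/2$ is dominated by $\tfrac12\dist(x,F)$. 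Adding up, $\e_G + \theta_F/2 < \dist(x,F)$, the desired contradiction. I should double-check the precise constants — in particular whether the case $H=\{0\}$ and the normalisation of ``$\theta_{\{0\}}$'' need a separate line — but the structure is: center is far from $F$ by biorthogonality, the slice is tiny by the inductive choice of $\e_G$, and the tube radius $\theta_F$ is tiny by \eqref{Claim: i)}; so the slice never reaches the tube. The main obstacle is precisely arranging these constants coherently across the two sub-cases $F\cap G = \{0\}$ versus $F\cap G \neq \{0\}$, but conditions \eqref{Claim: ii)} and \eqref{Claim: iii)} were tailored for exactly this bookkeeping.
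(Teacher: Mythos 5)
Your overall reduction is correct and matches the paper's: the inclusion $\subseteq$ is immediate from monotonicity, and the content is showing that each slice $S \in \Omega_G$ with $G \not\subseteq F$ misses $T(F,\theta_F/2)$, by exploiting that the slice is tiny, its center $z$ avoids the tubes $T(H,\theta_H)$ for $H \subsetneq G$, and the tube $T(F,\theta_F/2)$ is thin.

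However, the constant-chasing in your single unified argument has a genuine gap, and it is precisely the one you half-noticed and then abandoned when you wrote ``Here I split into cases according to whether $\dim G \leq \dim F$ or $\dim G > \dim F$ \dots\ wait, that can't happen since $m\le n$.'' The parameter $n$ in $\P_n$ controls $\dim G$ but not $\dim F$; both $\dim G \le \dim F$ and $\dim G > \dim F$ can genuinely occur, and they require different arguments. You always apply Lemma \ref{Lemma: bounded b.s.}\eqref{Item Lemma: bdd general} to the center $z$ regarded as an element of $G$, which gives $\dist(z,F\cap G)\le (\dim G)\,M\,\dist(z,F)$, so the factor you pay is $\dim G$. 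Combined with $\dist(z, F\cap G)\geq\theta_{F\cap G}$ and $\dist(z,F)<\e_G+\theta_F/2$, you need
\[
(\dim G)\,M\,\bigl(\e_G+\tfrac{\theta_F}{2}\bigr)<\theta_{F\cap G}.
\]
Condition \eqref{Claim: iii)} controls $\e_G$ by $\tfrac{1}{4(\dim G) M}\,\theta_{F\cap G}$, which is fine, but it only controls $\theta_F\le\e_F$ by $\tfrac{1}{4(\dim F)M}\,\theta_{F\cap G}$; multiplying by $(\dim G) M$ produces a factor $\tfrac{\dim G}{8\dim F}$, which is unbounded when $\dim G\gg\dim F$. (In the sub-case $F\cap G=\{0\}$, the same issue appears with your bound $\dist(z,F)\ge\tfrac1{(\dim G) M}$ against $\theta_F/2\le\tfrac{1}{8(\dim F)M}$.) So the inequality cannot be closed in general.

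The paper's proof resolves exactly this by splitting into two cases according to which of $\dim F,\dim G$ is larger. When $\dim F\ge\dim G$ (Case 1), it does essentially what you propose. When $\dim G>\dim F$ (Case 2), it applies the biorthogonal distance estimate to a suitable point of the \emph{smaller} space $F$ rather than to $z\in G$: it takes $w\in F$ with $\|x-w\|<\theta_F/2$ (here $x$ is the alleged bad point in $S\cap T(F,\theta_F/2)$), normalises it to $\tilde w\in\uS_F$, bounds $\dist(\tilde w,G)$ or $\dist(w,G)$, and then uses Lemma \ref{Lemma: bounded b.s.} with the factor $(\dim F)M$ instead of $(\dim G)M$. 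Because $\dim F$ is the smaller dimension, the constants close. To repair your argument you would need to incorporate this case distinction and the switch of which subspace receives the biorthogonal estimate.

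A minor further comment: the paper frames the argument as an induction on $\max\{\dim F,\dim G\}$, but the inductive hypothesis is never invoked in the inductive step, so this is really a case analysis; your choice to argue directly is harmless.
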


\begin{proof}[Proof of Fact \ref{Fact: compatibility}] \renewcommand\qedsymbol{$\square$}
The `$\subseteq$' inclusion is clear since $\P_n\subseteq \P_F$ if $\dim F\leq n$, by the monotonicity properties mentioned above. So, we only need to prove that, for every $F\in\F$ and every $n\in\N$, one has
\begin{equation*}
    \P_F \cap T(F,\theta_F/2)\subseteq \P_n.
\end{equation*}
(Notice that checking the inclusion $\P_F \cap T(F,\theta_F/2)\subseteq \P_n$ for every $n\geq \dim F$ is equivalent to checking it for every $n\in\N$, since $(\P_n)_{n\in\N}$ is a decreasing sequence.) By the definition (\ref{eq: def P_n}) of $\P_n$ it is thus sufficient to prove that, for every $F,G\in\F$ and every $S\in\Omega_G$,
\begin{equation}\label{eq: star for compatibility} \tag{$*$}
    \P_F \cap T(F,\theta_F/2)\subseteq \B_\Y \setminus S.
\end{equation}
We prove this by induction on $n\coloneqq\max\{\dim F,\dim G\}$. Throughout the argument, we assume $F\neq G$ as (\ref{eq: star for compatibility}) is trivially true when $F=G$.\smallskip 

For the case $n=1$, assume by contradiction that there are $F\neq G\in\mathcal{F}^1$ and $S\in\Omega_G$ with $\P_F\cap T(F,\theta_F/2) \cap S\neq \emptyset$. Pick $x\in \P_F\cap T(F,\theta_F/2) \cap S$. Then $\dist(x,F)< \theta_F/2$. Moreover, by condition (\ref{Claim: vi)}) of Claim \ref{Claim: slices}, $S\in\Omega_G$ implies the existence of $z\in \uS_G \cap S$; hence, by (\ref{Claim: v)}), $\|x-z\|<\e_G$. Thus we get $\dist(z,F)<\e_G + \theta_F/2$. Therefore, Lemma \ref{Lemma: bounded b.s.}(\ref{Item Lemma: bdd disjoint}) and conditions (\ref{Claim: i)}) and (\ref{Claim: ii)}) give the following contradiction
\begin{equation*}
    \frac{1}{M}\leq \dist(z,F)<\e_G + \theta_F/2\leq \frac{1}{4M}+ \frac{1}{8M}\leq\frac{1}{2M}.
\end{equation*}\smallskip

Now fix $n\geq 2$ and assume by induction that (\ref{eq: star for compatibility}) holds for every $F,G\in\F$ with $\max\{\dim F,\dim G\}\leq n-1$ and every $S\in\Omega_G$. Take $F,G\in\F$ with $\max\{\dim F,\dim G\}= n$ and $S\in\Omega_G$. First of all, since $S\in\Omega_G$, (\ref{Claim: v)}) and (\ref{Claim: vi)}) respectively imply that $\diam(S)<\e_G$ and $S=S(z,\psi,\delta)$ for some
\begin{equation}\label{eq: z not in previous tubular nhoods}
    z \in \uS_G \setminus \bigcup_{\substack{H\in\F\\ H \subsetneq G}} T(H,\theta_H).
\end{equation}
Let us stress that, in particular, $z\in S\cap G$, fact that we shall use several times below. We distinguish two cases: $\dim F=n$, or $\dim G=n$ and $\dim F\leq n-1$.

\medskip
\emph{Case 1.} $\dim F=n$.
\smallskip

Set $k\coloneqq\dim G\leq n$ and, towards a contradiction, assume that there is $x\in \P_F \cap T(F,\theta_F/2)$ such that $x\in S$. Since $x,z\in S$, by (\ref{Claim: v)}), $\|x-z\|<\e_G$. Moreover, $x\in T(F,\theta_F/2)$ implies $\dist(x,F)<\theta_F/2$, so $\dist(z,F)<\e_G + \theta_F/2$. Now there are three sub-cases.

\begin{itemize}
    \item If $F\cap G=\{0\}$, then we readily have a contradiction. Indeed, applying Lemma \ref{Lemma: bounded b.s.}(\ref{Item Lemma: bdd disjoint}) to $z\in\uS_G$ and $F$ gives the absurdity that
    \begin{equation*}
        \frac{1}{kM}\leq \dist(z,F)<\e_G + \theta_F/2 \overset{(\ref{Claim: i)}), (\ref{Claim: ii)})}{\leq} \frac{1}{4kM}+ \frac{1}{8nM} \overset{k \leq n}{\leq} \frac{1}{2kM}.
    \end{equation*}
    \item If $G\subseteq F$, then $\P_F\subseteq \P_G$. However, this contradicts $x\in \P_F\setminus \P_G$ ($x\in S$ with $S\in\Omega_G$ implies $x\notin \P_G$ by using item (\ref{Claim: vi)})).
    \item The last sub-case is that $F\cap G\neq \{0\}$ and $G\not\subseteq F$. In particular, these conditions give $F\cap G \subsetneq G$ (and $F\cap G \subsetneq F$ as well, since $\dim (F\cap G)\leq k-1$). We can now apply Lemma \ref{Lemma: bounded b.s.}(\ref{Item Lemma: bdd general}) to get
    \begin{equation*}\begin{split}
        \dist(z, F\cap G) &\leq kM\cdot \dist(z,F)<kM (\e_G + \theta_F/2) \overset{(\ref{Claim: i)})}{\leq} kM (\e_G + \e_F/2)\\
        &\overset{(\ref{Claim: iii)})}{\leq}kM \left(\frac{1}{4kM}\theta_{F\cap G}+ \frac{1}{8nM}\theta_{F\cap G}\right) \overset{k \leq n}{\leq} \frac{1}{2}\theta_{F\cap G}.
    \end{split}\end{equation*}
    Notice that, when applying condition (\ref{Claim: iii)}) we are using the assumptions that $F\cap G \subsetneq G$ and $F\cap G \subsetneq F$. This estimate implies that $z\in T(F\cap G,\theta_{F\cap G})$; however, this is a contradiction with (\ref{eq: z not in previous tubular nhoods}), since $F\cap G \subsetneq G$.
\end{itemize}

\medskip
\emph{Case 2.} $\dim G=n$ and $\dim F\leq n-1$. 
\smallskip

Here we set $k\coloneqq\dim F\leq n-1$ and, as in Case 1, we assume towards a contradiction that there is $x\in \P_F \cap T(F,\theta_F/2)$ with $x\in S$. As above, $x,z\in S$ and (\ref{Claim: v)}) imply $\|x-z\|<\e_G$; we again distinguish the same three sub-cases.
\begin{itemize}
    \item If $F\cap G=\{0\}$, we use the assumption that $x\in T(F,\theta_F/2)$ to find $w\in F$ with $\|x-w\|<\theta_F/2$. Since $x\in S$, $1-\e_G\leq \|x\| \leq1$, so $1-\e_G-\theta_F/2\leq \|w\| \leq1+ \theta_F/2$. Then the vector $\tilde{w}\coloneqq \frac{w}{\|w\|}$ satisfies $\|w-\tilde{w}\|\leq\e_G +\theta_F/2$, hence $\|x-\tilde{w}\|\leq\e_G +\theta_F$. Moreover $\dist(x,G)\leq\|x-z\|<\e_G$. This finally yields $\dist(\tilde{w},G)\leq 2\e_G +\theta_F$. We can now apply Lemma \ref{Lemma: bounded b.s.}(\ref{Item Lemma: bdd disjoint}) to $\tilde{w}\in\uS_F$ and $G$ to get the following contradiction:
    \begin{equation*}
        \frac{1}{kM}\leq\dist(\tilde{w},G)\leq 2\e_G +\theta_F \leq \frac{2}{4nM}+ \frac{1}{4kM} \leq \frac{3}{4kM},
    \end{equation*}
    where we are using again (\ref{Claim: i)}), (\ref{Claim: ii)}), and that $k\leq n$.
    \item If $F\subseteq G$, then actually $F\subsetneq G$, as $\dim F=k\leq n-1$. The assumption $x\in T(F,\theta_F/2)$ gives $\dist(x,F)<\theta_F/2$, whence (using (\ref{Claim: iii)}) and that $F \subsetneq G$)
    \begin{equation*}
        \dist(z,F)<\e_G + \theta_F/2 \overset{(\ref{Claim: iii)})}{\leq} \frac{1}{4nM}\theta_F + \theta_F/2<\theta_F.
    \end{equation*}
    Hence $z\in T(F,\theta_F)$, which is in contradiction with (\ref{eq: z not in previous tubular nhoods}) since $F\subsetneq G$.
    \item If $F\cap G\neq \{0\}$ and $F\not\subseteq G$, then $F\cap G \subsetneq F$ (and $F\cap G \subsetneq G$ as well since $\dim (F\cap G)\leq k\leq n-1$). As before, we can pick $w\in F$ with $\|x-w\|< \theta_F/2$. Since $\|x-z\|<\e_G$, such a $w$ satisfies $\dist(w,G)\leq \|w-z\|\leq \e_G+\theta_F/2$. Combining the estimate $\dist(w,G)\leq \e_G+\theta_F/2$ with Lemma \ref{Lemma: bounded b.s.}(\ref{Item Lemma: bdd general}) applied to $w\in F$ (the lemma is used in the second inequality) we obtain
    \begin{equation*}\begin{split}
        \dist(z, F\cap G) &\leq \dist(w, F\cap G)+ (\e_G + \theta_F/2)\\
        &\leq kM\cdot \dist(w,G) + (\e_G + \theta_F/2)\\
        &\leq (kM+1)\cdot (\e_G + \theta_F/2) \overset{(\ref{Claim: i)})}{\leq} 2kM \cdot (\e_G + \e_F/2)\\
        &\overset{(\ref{Claim: iii)})}{\leq}2kM \left(\frac{1}{4nM}\theta_{F\cap G}+ \frac{1}{8kM}\theta_{F\cap G}\right) \overset{k\leq n}{\leq} \frac{3}{4}\theta_{F\cap G}.
    \end{split}\end{equation*}
    Again, this implies $z\in T(F\cap G,\theta_{F\cap G})$, a contradiction with (\ref{eq: z not in previous tubular nhoods}).
\end{itemize}
This concludes the induction step, hence the proof of Fact \ref{Fact: compatibility}.
\end{proof}

\begin{step}\label{Step: polyhedral norm} Construction of a polyhedral LFC norm and proof of Theorem \ref{Th A}(\ref{Th A: poly}).
\end{step}

In this step we shall use the sets $(\P_F)_{F\in\F}$ and $(\P_n)_{n\in\N}$ to build a polyhedral and LFC norm on $\Y$ that approximates $\n$. The desired norm will be the Minkowski functional of the set $\P$ defined by
\begin{equation}\label{eq: def P}
    \P\coloneqq\bigcap_{n\in\N} \P_n.
\end{equation}
To begin with, every set $\P_F$ ($F\in\F$) is a closed, convex, and symmetric set (the symmetry is consequence of condition (\ref{Claim: iv)})); plainly, we also have $\P_F\subseteq \B_\Y$. Moreover, by (\ref{Claim: v)}), $\P_F$ is obtained from $\B_\Y$ by removing slices of diameter at most $\e_F\leq \e$, so $(1-\e)\B_\Y\subseteq \P_F$. As a consequence of these remarks, the set $\P$ is a closed, convex and symmetric set with
\begin{equation*}
    (1-\e) \B_\Y \subseteq \P \subseteq \B_\Y.
\end{equation*}
Thus, the Minkowski functional $\mu_\P$ of $\P$ is an equivalent norm on $\Y$, whose unit ball is precisely $\P$. Further, the previous chain of inclusions gives $\n \leq \mu_\P\leq (1-\e)^{-1}\n$, whence $\mu_\P$ approximates $\n$. We now show that $\mu_\P$ is a polyhedral and LFC norm on $\Y$, thereby concluding the proof of Theorem \ref{Th A}(\ref{Th A: poly}). \smallskip

In order to check that $(\Y,\mu_\P)$ is polyhedral, take a finite-dimensional subspace $E$ of $\Y$. Then there is $F\in\F$ with $E\subseteq F$. Hence, it suffices to prove that the unit ball $\P\cap F$ of $F$ is a polyhedron. From Fact \ref{Fact: compatibility} we obtain in particular $\P_n\cap F = \P_F \cap F$ for every $n\in\N$, $n\geq \dim F$; thus, $\P \cap F = \P_F \cap F$. We enumerate all the slices that appear in the definition of $\P_F$:

\begin{equation}\label{eq: enumerate slices}
    \bigcup_{\substack{G\in\F \\ G \subseteq F}}\Omega_G = \{S_j\}_{j=1}^N,
\end{equation}
where $S_j= S(x_j,\psi_j,\delta_j)$. Then we can write
\begin{equation*}\begin{split}
    \P\cap F= \P_F\cap F &=\{y\in\B_F \colon \langle \psi_j, y \rangle\leq 1-\delta_j \mbox{ for all } j=1,\dots, N \}\\
    &=\{y\in F \colon \langle \psi_j, y \rangle\leq 1-\delta_j \mbox{ for all } j=1,\dots, N \}=:\mathcal{C}.
\end{split}\end{equation*}
Indeed, the second equality is just the definition of $\P_F$, while the `$\subseteq$' inclusion of the third equality is obvious. In order to prove that $\mathcal{C}\subseteq \P_F \cap F$, it is sufficient to prove that $\mathcal{C}\subseteq \B_F$. Towards a contradiction, assume that there is $x\in \mathcal{C}$ with $\|x\|>1$. Then $x/\|x\|\in \mathcal{C}$ as well. Notice that the second clause of (\ref{Claim: vii)}) gives in particular $\uS_F\subseteq \cup_{j=1}^N S_j$. Hence, there is $j\in\{1,\dots,N\}$ such that $x/\|x\|\in S_j$, which contradicts $x/\|x\| \in \mathcal{C}$.

Consequently, we obtained that
\begin{equation*}
    \P\cap F = \P_F \cap F = \{y\in F \colon \langle \psi_j, y \rangle\leq 1-\delta_j \mbox{ for all } j=1,\dots, N \}
\end{equation*}
is a polyhedron, hence $\mu_\P$ is a polyhedral norm. \smallskip

Finally, we use Lemma \ref{Lemma: LFC} to show that the norm is LFC. First, in the notation of (\ref{Claim: vii)}), we have $\P_F=\B_\Y \setminus \U_F$, so $\dist(\uS_F, \P_F)\geq 2\theta_F$ for every $F\in\F$. Thus
\begin{equation}\label{eq: PF cap F <1}
    \|x\|\leq 1-2\theta_F \mbox{ for all } x\in \P_F \cap F.
\end{equation}
Moreover, as before, Fact \ref{Fact: compatibility} gives $\P\cap T(F,\theta_F/2) = \P_F \cap T(F,\theta_F/2)$ for every $F\in\F$. 

Now fix arbitrarily $x\in \partial \P$. Then there is $F\in\F$ with $x\in F$; hence $x\in \P_F \cap F$ and $\|x\|\leq 1-2\theta_F$ by (\ref{eq: PF cap F <1}). Consider the open ball $\U\coloneqq \B^o_{\theta_F/2}(x)\coloneqq\{y\in\Y \colon \|y-x\|<\theta_F/2 \}$; then $\U\subseteq T(F,\theta_F/2)$. Indeed, for every $y\in\U$, $\dist(y,F)\leq \|y-x\|<\theta_F/2$ and $\|y\|\leq \|x\| +\theta_F/2 \leq 1-2\theta_F +\theta_F/2 <1$. The inclusion $\U\subseteq T(F,\theta_F/2)$ then allows us to `localise' the compatibility condition and deduce from Fact \ref{Fact: compatibility} that $\P\cap \U = \P_F \cap \U$.

Moreover, using (\ref{eq: enumerate slices}) and the definition of $\P_F$, we can write
\begin{equation*}
    \P_F =\left\{y\in\B_\Y \colon \left\langle \frac{\psi_j}{1-\delta_j}, y \right\rangle \leq1 \mbox{ for all } j=1,\dots, N \right\}.
\end{equation*}

Consequently, for every $y\in\U$, $y\in \P$ if and only if $\left\langle \frac{\psi_j}{1-\delta_j}, y \right\rangle \leq1$ for every $j=1,\dots, N$ (here we are using the equality $\P\cap \U = \P_F \cap \U$). Therefore, Lemma \ref{Lemma: LFC} implies that $\mu_\P$ is LFC, as desired.

\begin{step}\label{Step: smooth norm} Smoothing and proof of Theorem \ref{Th A}(\ref{Th A: smooth}).
\end{step}

In this step we glue together in a smooth way the functionals corresponding to the slices from Step \ref{Step: slices} and obtain a $C^\infty$-smooth and LFC norm on $\Y$. The smoothness of the resulting norm will crucially depend again on the compatibility condition (\ref{eq: compatibility}). In order to leave room for the smoothing, the main technical point consists in actually applying Fact \ref{Fact: compatibility} to some suitably enlarged slices and not to the slices from Step \ref{Step: slices}.\smallskip

Fix $F\in\F$ and write\footnote{Note that, differently from (\ref{eq: enumerate slices}), here we do not enumerate the slices in $\Omega_G$ with $G\subsetneq F$.} $\Omega_F= \left\{\pm S(x_j,\psi_{x_j}, \delta_{x_j}) \right\}_{j=1}^k$. We first define
\begin{equation*}
    \Theta_F \coloneqq\left\{\frac{\psi_{x_j}}{1-\delta_{x_j}}\right\}_{j=1}^k.
\end{equation*}
Observe that this notation allows us to write
\begin{equation}\label{eq: PF via Thetas}
    \P_F\coloneqq\left\{y\in\B_\Y\colon |\langle\psi,y\rangle|\leq1 \mbox{ for all } \psi\in\bigcup_{\substack{G\in\F \\ G \subseteq F}}\Theta_G \right\}.
\end{equation}
Moreover, since all the finitely many slices in $\Omega_F$ have diameter less than $\e_F$, by Fact \ref{Fact: continuity slice} there is $\delta_F>0$ such that $\diam(S(x_j,\psi_{x_j}, \delta_{x_j}+\delta_F))<\e_F$ for every $j=1,\dots,k$. We then pick, for every $F\in\F$, an even, convex, and $C^\infty$-smooth function $\ro_F\colon \R\to[0,\infty)$ such that $\ro_F(1)=1$ and $\ro_F(s)=0$ if and only if $|s|\leq1-\delta_F$. Note that every such a function is strictly increasing on $[1-\delta_F,\infty)$. We are now in position to define $\Phi\colon \Y\to[0,\infty]$ by
\begin{equation*}
    \Phi(y)\coloneqq \sum_{F\in\F}\sum_{\psi\in\Theta_F} \ro_F\big(\langle\psi, y\rangle\big).
\end{equation*}
For $F\in\F$, we also define $\Phi_F\colon \Y \to[0,\infty)$ by
\begin{equation*}
    \Phi_F(y)\coloneqq \sum_{\substack{G\in\F \\ G \subseteq F}} \sum_{\psi\in\Theta_G} \ro_G\big(\langle\psi, y\rangle\big).
\end{equation*}

We wish to apply Lemma \ref{Lemma: implicit mu} to the function $\Phi$ and sets $\D\coloneqq\{\Phi<1\}$ and $\B\coloneqq\{\Phi\leq 1-\e\}$. To this aim we first observe that\footnote{Recall the definition of $\P$ from (\ref{eq: def P}).}
\begin{equation}\label{eq: smooth body approximates}
    (1-\e) \B_\Y \subseteq \{\Phi=0\} \subseteq \B \subseteq \D \subseteq \P \subseteq \B_\Y.
\end{equation}
Indeed, in order to check the first inclusion note that, for $S=S(x_j,\psi_{x_j},\delta_{x_j})\in\Omega_F$, the condition $\diam (S(x_j,\psi_{x_j},\delta_{x_j}+ \delta_F))<\e_F$, together with (\ref{Claim: i)}) of Claim \ref{Claim: slices}, implies $\delta_{x_j}+ \delta_F<\e_F\leq \e$. Hence, if $y\in\Y$ satisfies $\|y\|\leq1-\e$, then for every $F\in\F$ and every $\psi=\frac{\psi_{x_j}}{1-\delta_{x_j}}\in\Theta_F$ we have
\begin{equation*} 
|\langle\psi,y\rangle|= \left|\left\langle \frac{\psi_{x_j}}{1-\delta_{x_j}},y \right\rangle \right|\leq \frac{1-\e}{1-\delta_{x_j}}\leq \frac{1-\delta_F -\delta_{x_j}}{1-\delta_{x_j}} \leq 1-\delta_F.
\end{equation*}

This yields $\Phi(y)=0$ and shows the first inclusion. Next, if $y\in \D$, then for every $\psi\in\Theta_F$ and every $F\in\F$ we have $\ro_F\big(\langle\psi, y\rangle\big) <1$, so $|\langle\psi,y\rangle|<1$ by the properties of the functions $\ro_F$. Hence, $y\in \P_F$ for every $F\in\F$, whence $\D\subseteq \P$. The other inclusions being trivial, (\ref{eq: smooth body approximates}) is proved.

In particular, the set $\B$ is bounded and closed in $\Y$ (since $\Phi$ is lower semi-continuous on $\Y$ by Fatou's lemma). Moreover, $\Phi$ is even and convex, hence $\D$ is convex and symmetric. In order to be able to apply Lemma \ref{Lemma: implicit mu} we need to show that $\D$ is open and that $\Phi$ is $C^\infty$-smooth and LFC on $\D$. All these properties follow rather easily from the next claim, asserting that $\Phi$ is locally a finite sum on $\D$.

\begin{claim}\label{Claim: Phi locally finite sum} For every $y\in \D$ there are an open subset $\U$ of $\Y$ with $y\in \U$ and $F\in\F$ such that $\Phi =\Phi_F$ on $\U$.
\end{claim}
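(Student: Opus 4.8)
The plan is to fix $y \in \D$ and show that only finitely many terms $\ro_G(\langle \psi, y\rangle)$ with $\psi \in \Theta_G$ can be nonzero in a whole neighbourhood of $y$. Since $y \in \D \subseteq \P \subseteq \B_\Y$, we have $\|y\| \leq 1$; and since $y$ lies in the open set $\D$, we may fix a small ball $\U \coloneqq \B^{\mathsf o}_{\eta}(y)$ around $y$ contained in $\B_\Y$ (say with $\|z\| \leq 1$ for $z \in \U$, after shrinking). Because $y \in \Y = \spn\{e_\alpha\}_{\alpha\in\Gamma}$, there is a smallest $F \in \F$ with $y \in F$; write $n_0 \coloneqq \dim F$. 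I claim that for a suitable $\eta$ we have $\Phi = \Phi_F$ on $\U$, i.e.\ every $\psi \in \Theta_G$ with $G \in \F$, $G \not\subseteq F$, satisfies $\ro_G(\langle \psi, z\rangle) = 0$ for all $z \in \U$.

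To see this, suppose $G \in \F$ and $\psi \in \Theta_G$; then $\psi = \frac{\psi_{x_j}}{1-\delta_{x_j}}$ for some slice $S(x_j, \psi_{x_j}, \delta_{x_j}) \in \Omega_G$, and by Claim \ref{Claim: slices}(\ref{Claim: vi)}) the exposed point $x_j$ lies in $\uS_G$, with $\psi_{x_j}$ norming for $x_j$. If $\ro_G(\langle \psi, z\rangle) > 0$ for some $z \in \U$ with $\|z\| \leq 1$, then $\langle \psi, z\rangle > 1 - \delta_G$, hence $\langle \psi_{x_j}, z\rangle > (1-\delta_{x_j})(1-\delta_G) \geq 1 - \delta_{x_j} - \delta_G$, so $z \in S(x_j, \psi_{x_j}, \delta_{x_j} + \delta_G)$. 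Since $x_j$ also lies in this enlarged slice, the choice of $\delta_G$ gives $\|z - x_j\| < \e_G$, so $\dist(z, G) < \e_G$, and therefore $\dist(y, G) < \e_G + \eta$. Now I split into whether $G$ is `small' or `large': there are only finitely many $G \in \F$ with $\dim G \leq n_0$, and for each such $G$ with $G \not\subseteq F$ the quantity $\dist(y, G)$ is either $0$ (impossible, as then $y \in G$ contradicts minimality of $F$ unless $G \supsetneq$ something — more precisely $y \in F \cap G \subsetneq F$, again contradicting minimality) or a fixed positive number; shrinking $\eta$ handles all of these finitely many $G$ at once. For $G$ with $k \coloneqq \dim G > n_0 \geq \dim F$, I argue as in the three sub-cases of Fact \ref{Fact: compatibility}, Case 1: since $F \cap G \subsetneq G$ (as $\dim(F \cap G) \leq \dim F < k$), Lemma \ref{Lemma: bounded b.s.}(\ref{Item Lemma: bdd general}) applied to $y \in F$ gives $\dist(y, F \cap G) \leq kM \cdot \dist(y, G) < kM(\e_G + \eta)$; choosing $\eta$ small enough relative to the $\theta_H$'s appearing for subspaces $H \subsetneq F$ and invoking Claim \ref{Claim: slices}(\ref{Claim: iii)}) (note $F \cap G \subsetneq F$ and $F \cap G \subsetneq G$, so (\ref{Claim: iii)}) applies with the roles $\e_G \leq \tfrac{1}{4kM}\theta_{F\cap G}$), we deduce $\dist(y, F \cap G) < \tfrac12 \theta_{F\cap G}$, hence $\dist(x_j, F \cap G) < \tfrac12\theta_{F\cap G} + \e_G < \theta_{F\cap G}$, i.e.\ $x_j \in T(F\cap G, \theta_{F\cap G})$ with $F \cap G \subsetneq G$, contradicting Claim \ref{Claim: slices}(\ref{Claim: vi)}). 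Hence no such $G \not\subseteq F$ contributes on $\U$, so $\Phi = \Phi_F$ on $\U$.

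The one genuine subtlety is that the smallness of $\eta$ must be chosen \emph{after} inspecting the finitely many relevant $\theta_{F\cap G}$: since $F$ is fixed, the subspaces $F \cap G$ that can arise are among the finitely many subspaces of $F$, so there is a single positive lower bound $\theta^* \coloneqq \min\{\theta_H : H \in \F, H \subsetneq F\}$ (together with $\min\{\dist(y,G) : G \in \F, \dim G \leq n_0, y \notin G\} > 0$), and one picks $\eta$ smaller than a fixed fraction of $\theta^*$; the $k$-dependence in the estimate $kM(\e_G + \eta)$ is absorbed because (\ref{Claim: iii)}) already carries the matching factor $\tfrac{1}{4kM}$, exactly as in Fact \ref{Fact: compatibility}. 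This is the step I expect to be the main obstacle — not because it is deep, but because one has to verify that the bookkeeping of constants goes through uniformly over the infinitely many `large' $G$, which is precisely what the careful choices in Step \ref{Step: slices} were designed to guarantee. Once Claim \ref{Claim: Phi locally finite sum} is established, $\D$ is open (on $\U$, $\Phi = \Phi_F$ is a finite sum of continuous functions, so $\{\Phi < 1\}$ is open), and $\Phi$ is $C^\infty$-smooth and LFC on $\D$ (locally it equals the finite sum $\Phi_F$ of compositions of the $C^\infty$ functions $\ro_G$ with the finitely many functionals in $\bigcup_{G \subseteq F}\Theta_G$), so Lemma \ref{Lemma: implicit mu} applies to $\Phi$, $\D$, and $a \coloneqq 1-\e > 0 = \Phi(0)$, yielding the desired $C^\infty$-smooth and LFC norm $\mu_\B$ on $\Y$, which by (\ref{eq: smooth body approximates}) satisfies $\n \leq \mu_\B \leq (1-\e)^{-1}\n$ and hence approximates $\n$.
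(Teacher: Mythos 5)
The proposal takes a genuinely different route from the paper: instead of the paper's slick device of enlarging only the slices $\Omega_G$ to $\tilde\Omega_G$ and then applying Fact~\ref{Fact: compatibility} wholesale to the modified system, you try to re-derive the compatibility estimates directly around the point $y$ within a small ball $\B^{\mathsf o}_\eta(y)$. This alternative route is in principle feasible, but as written it has a genuine gap.

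The central error is the assertion that ``there are only finitely many $G \in \F$ with $\dim G \leq n_0$.'' This is false whenever $\Gamma$ is infinite: already $\mathcal{F}^1$ has $|\Gamma|$ elements. Your entire treatment of the ``small $G$'' case rests on this finiteness (you shrink $\eta$ to beat $\dist(y,G)$ for each of the ``finitely many'' $G$), so that case is not handled. Moreover, even a uniform lower bound on $\dist(y,G)$ would not save the argument as set up: for $G$ with $F \cap G = \{0\}$ and $\dim G = k \leq n_0$, Lemma~\ref{Lemma: bounded b.s.} applied to $y \in F$ gives $\dist(y,G) \geq \|y\|/(n_0 M)$, whereas $\e_G$ is only known to be $\leq 1/(4kM)$; if $k$ is small (\emph{e.g.} $k=1$) and $\|y\|$ is not close to $1$, these bounds do not contradict $\dist(y,G) < \e_G + \eta$. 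The paper's Fact~\ref{Fact: compatibility} avoids exactly this pitfall by applying the biorthogonality lemma in the opposite direction --- to the exposed point $x_j \in \uS_G$ (which has norm one) against the tube subspace $F$, obtaining the clean bound $\dist(x_j, F) \geq 1/(kM)$ --- rather than to $y$ against $G$. Your ``large $G$'' case has two further (more easily repairable) issues: the constant from Lemma~\ref{Lemma: bounded b.s.}(\ref{Item Lemma: bdd general}) applied to $y \in F$ is $(\dim F)M = n_0 M$, not $kM$; with $kM$ as written, the term $kM\eta$ is unbounded in $k$ and no single choice of $\eta$ works for all $G$. You also do not treat the sub-case $F \cap G = \{0\}$ there, in which $\theta_{F\cap G}$ is undefined. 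Altogether, the bookkeeping that you flag as ``the main obstacle'' is precisely where the argument breaks down; the paper sidesteps all of it by re-invoking Fact~\ref{Fact: compatibility} for the enlarged system and never re-doing the estimates.
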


Assuming the validity of the claim, note that the function $\Phi_F$ is clearly $C^\infty$-smooth and LFC on $\Y$ (it is defined via a finite sum). Thus Claim \ref{Claim: Phi locally finite sum} immediately yields that $\Phi$ is $C^\infty$-smooth and LFC on some open neighbourhood of $\D$ in $\Y$. Hence, it also follows that $\D$ is open. We are then in position to apply Lemma \ref{Lemma: implicit mu}, which leads us to the conclusion that $\mu_\B$ is a $C^\infty$-smooth and LFC norm. Finally, (\ref{eq: smooth body approximates}) implies $\n\leq \mu_\B\leq (1-\e)^{-1}\n$, hence $\mu_\B$ approximates the norm $\n$. Consequently, in order to conclude the proof of Theorem \ref{Th A}(\ref{Th A: smooth}), we only need to prove Claim \ref{Claim: Phi locally finite sum}.

\begin{proof}[Proof of Claim \ref{Claim: Phi locally finite sum}] \renewcommand\qedsymbol{$\square$}
Take $F\in\F$ with $y\in F$ and consider the set $\U$ defined by
\begin{equation*}
    \U\coloneqq T(F,\theta_F/2) \cap \left\{z\in\Y\colon \|z\|<1, \, |\langle\psi,z\rangle|<1 \mbox{ for every } \psi\in \bigcup_{\substack{G\in\F \\ G \subseteq F}}\Theta_G \right\}.
\end{equation*}

Observe that $\U$ is indeed open in $\Y$. In fact, $T(F,\theta_F/2)$ is open in $\B_\Y$, so $T(F,\theta_F/2) \cap \{z\in\Y\colon \|z\|<1\}$ is open in $\Y$; this and the fact that $\cup_{\substack{G\in\F \\ G \subseteq F}}\Theta_G$ is a finite set yield that $\U$ is open in $\Y$. Moreover, $y\in\U$. Indeed, the assumption that $y\in \D$ yields $|\langle\psi,y\rangle|<1$ for every $\psi\in\Theta_G$ and every $G\in\F$, $G\subseteq F$. By the same reason (or from (\ref{eq: smooth body approximates})) we also get that $y\in \P_F \cap F$. Hence, $\|y\|<1$, in light of (\ref{eq: PF cap F <1}). Finally, since $y\in \B_F\subseteq T(F,\theta_F/2)$, it follows that $y\in\U$.

We shall now prove that $\Phi=\Phi_F$ on the set $\U$. This amounts to proving that for every $z\in\U$, every $G\in\F$, with $G\not\subseteq F$, and every $\psi\in \Theta_G$ one has $\ro_G(\langle\psi, z\rangle)=0$; equivalently, that $|\langle\psi, z\rangle|\leq 1-\delta_G$.

Now the main idea comes. Fix $G\in\F$ with $G\not\subseteq F$ and write $\Omega_G= \left\{\pm S(x_j,\psi_{x_j}, \delta_{x_j}) \right\}_{j=1}^k$. Set $\tilde{\Omega}_G\coloneqq \left\{\pm S(x_j,\psi_{x_j}, \delta_{x_j}+\delta_G) \right\}_{j=1}^k$ and $\tilde{\Omega}_H=\Omega_H$ for $H\in\F$, $H\neq G$. Then define sets $\tilde{\P}_H$ and $\tilde{\P}_n$ as in (\ref{eq: def P_F}) and (\ref{eq: def P_n}), but replacing the slices $(\Omega_H)_{H\in\F}$ with the slices $(\tilde{\Omega}_H)_{H\in\F}$. Since $G\not\subseteq F$, we have $\tilde{\Omega}_H=\Omega_H$ for every $H\in\F$, $H\subseteq F$; hence $\tilde{\P}_F=\P_F$. Moreover, the system of slices $(\tilde{\Omega}_H)_{H\in\F}$ satisfies all conditions (\ref{Claim: i)})--(\ref{Claim: vii)}) in Claim \ref{Claim: slices} with the same parameters $(\e_H)_{H\in\F}$ and $(\theta_H)_{H\in\F}$. Consequently, the sets $\tilde{\P}_H$ and $\tilde{\P}_n$ also satisfy the compatibility condition (\ref{eq: compatibility}) from Fact \ref{Fact: compatibility}. Hence, given $n\in\N$ with $n\geq \dim F, \dim G$, we have
\begin{equation*}
    \P_F\cap T(F,\theta_F/2)= \tilde{\P}_F\cap T(F,\theta_F/2) \overset{\text{(\ref{eq: compatibility})} }{\subseteq} \tilde{\P}_n \subseteq \tilde{\P}_G.
\end{equation*}
Besides, $\U\subseteq \P_F\cap T(F,\theta_F/2)$ by (\ref{eq: PF via Thetas}). Therefore, for every $z\in\U$ and every slice $\pm S(x_j,\psi_{x_j}, \delta_{x_j}+\delta_G)\in \tilde{\Omega}_G$, we have $z\notin \pm S(x_j,\psi_{x_j}, \delta_{x_j}+\delta_G)$; in other words, $|\langle\psi_{x_j},z \rangle|\leq 1-\delta_{x_j}- \delta_G$, for every $j=1,\dots,k$.

Finally, if $\psi\in\Theta_G$, there is $j\in\{1,\dots, k\}$ with $\psi=\frac{\psi_{x_j}}{1-\delta_{x_j}}$, whence
\begin{equation*}
    |\langle\psi,z\rangle|= \left|\left\langle \frac{\psi_{x_j}}{1-\delta_{x_j}},z \right\rangle \right|\leq \frac{1-\delta_{x_j}- \delta_G}{1- \delta_{x_j}}\leq 1-\delta_G.
\end{equation*}
This yields $\ro_G(\langle\psi,z\rangle)=0$; hence $\Phi(z)=\Phi_F(z)$, as desired.
\end{proof}
As we explained before, this concludes Step \ref{Step: smooth norm} and the proof of Theorem \ref{Th A}(\ref{Th A: smooth}).
\end{proof}

\begin{remark}\label{Rmk: denting LUR} It is apparent from the above proof that we only used the LUR condition via Fact \ref{Fact: small slice}, namely we only used that the fact that if $\Y$ is LUR, then every point of $\uS_\Y$ is strongly exposed. One could wonder whether the argument could be modified as to only require every point of $\uS_\Y$ being denting for $\B_\Y$. However, this is not a more general assumption, since Raja \cite{Raja} proved that every normed space $\Y$ such that every point of $\uS_\Y$ is a denting point for $\B_\Y$ actually has a LUR renorming. Indeed, although \cite[Theorem~1]{Raja} is stated for Banach spaces, it is readily seen that the proof of the implication (iii)$\Rightarrow$(i) there is valid for every normed space. The same result appeared earlier in \cite{T denting LUR}, but the (probabilistic) proof there seems to really depend on completeness.
\end{remark}

As we already mentioned in the Introduction, the following corollary of Theorem \ref{Th A} generalises a result from \cite{Moreno open dense}.

\begin{corollary}\label{Cor: open dense} Let $\X$ be a Banach space with a fundamental biorthogonal system. Then densely many norms on $\X$ are $C^\infty$-smooth and LFC on a dense open subset of $\X$.
\end{corollary}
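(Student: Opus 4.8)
The plan is to transfer the smoothness obtained on the dense subspace $\Y = \spn\{e_\alpha\}_{\alpha\in\Gamma}$ in Theorem \ref{Th A}(\ref{Th A: smooth}) back to the whole space $\X$, at the cost of getting smoothness only on a dense open subset of $\X$ rather than on all of $\X\setminus\{0\}$. The key observation is that the $C^\infty$-smooth and LFC norm $\mu_\B$ constructed on $\Y$ is (like every LFC norm) locally a finite sum of compositions $\ro_G(\langle\psi,\cdot\rangle)$ with $\psi$ ranging over the finite family $\cup_{G\subseteq F}\Theta_G\subseteq\Y^*$; crucially, every such functional extends to a functional on $\X$ (indeed the $\psi_{x_j}$ come from Hahn--Banach norming functionals on $\Y$, but we may instead build them inside $\X^*$ from the start, since the fundamental biorthogonal system lives in $\X$ and the LUR renorming of $\Y$ in Theorem \ref{Th: fundamental b.s.}(\ref{Item Th: LUR norm}) can itself be taken to be the restriction of an equivalent norm on $\X$). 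Thus the function $\Phi$ from Step \ref{Step: smooth norm} is actually defined on all of $\X$, it is convex, even, lower semicontinuous, and the body $\B = \{\Phi\le 1-\e\}$ is a bounded closed symmetric convex body in $\X$ whose Minkowski functional $\mu_\B$ is an equivalent norm on $\X$ approximating any prescribed equivalent norm.

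First I would make precise that $\Phi$ extends to $\X$: rerun the construction of Steps \ref{Step: slices}--\ref{Step: smooth norm} choosing the norming functionals $\psi_x$ inside $\uS_{\X^*}$ (Fact \ref{Fact: small slice} applied to the LUR norm on $\Y$, which we arrange to be the restriction of an equivalent norm on $\X$, still gives small slices of $\B_\Y$, and a norming functional for $x\in\uS_\Y$ extends by Hahn--Banach to $\uS_{\X^*}$). Then the whole combinatorial construction is unchanged and $\Phi\colon\X\to[0,\infty]$ makes sense. Next I would set $\D_\X\coloneqq\{x\in\X\colon \Phi(x)<1\}$ and observe that on the open set $\D_\X\cap\spn\{e_\alpha\}$, which is dense in $\D_\X$ (since $\Y$ is dense in $\X$ and $\D_\X$ is open), the argument of Claim \ref{Claim: Phi locally finite sum} shows $\Phi$ is locally a finite sum. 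More carefully: for $y\in\D_\X$ with $y\in F$ for some $F\in\F$, the neighbourhood $\U$ of Claim \ref{Claim: Phi locally finite sum} works verbatim, because the compatibility Fact \ref{Fact: compatibility} and the estimates in its proof only involve the combinatorics of $\F$ and distances computed via the bounded biorthogonal system, all of which are available in $\X$. Hence $\Phi$ agrees with the finite sum $\Phi_F$ on an open neighbourhood of every point of $\Y\cap\D_\X$, so $\Phi$ is $C^\infty$-smooth and LFC on the open set $\mathcal{O}\coloneqq\bigcup_{F\in\F}\U_F$, which contains $\Y\cap\D_\X$ and is therefore dense in $\D_\X$.

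Finally I would pass from $\Phi$ to the norm. By Lemma \ref{Lemma: implicit mu} applied on the open convex set $\mathcal{O}$ (or rather, a routine localisation of it, since $\mathcal{O}$ need not be convex: one applies the Implicit Function Theorem pointwise, as in the proof of Lemma \ref{Lemma: implicit mu}, at each $x\in\uS_{\mu_\B}\cap\mathcal{O}$), the norm $\mu_\B$ is $C^\infty$-smooth and LFC at every point of the open cone $\R^+\cdot(\uS_{\mu_\B}\cap\mathcal{O})$, which is a dense open subset of $\X$. Density of smooth norms is inherited from the scaling $\n\le\mu_\B\le(1-\e)^{-1}\n$, exactly as in Step \ref{Step: smooth norm}. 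The main obstacle I anticipate is bookkeeping rather than conceptual: one must check that every ingredient of Steps \ref{Step: slices}--\ref{Step: smooth norm} that a priori lives in $\Y$ (the norming functionals, the LUR renorming, the slices) can be realised inside $\X$ without disturbing any of the inequalities in Claim \ref{Claim: slices} and Fact \ref{Fact: compatibility}; and one must handle the non-convexity of $\mathcal{O}$ when invoking the Implicit Function Theorem, which is why I would state the smoothing as a pointwise application of the IFT rather than a single use of Lemma \ref{Lemma: implicit mu}. The comparison with Moreno's theorem and with the Asplund characterisation is then immediate from the statement.
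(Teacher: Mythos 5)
Your proposal is correct in spirit but takes a substantially heavier route than the paper, and contains one gap that needs addressing. The paper's proof is a one-paragraph transfer argument that uses no re-derivation at all: it starts from the $C^\infty$-smooth LFC norm $\n$ already constructed on $\Y$ by Theorem~\ref{Th A}(\ref{Th A: smooth}), and observes that the LFC property means precisely that around every $y\in\Y\setminus\{0\}$ there is a ball $\Bo(y,\delta_y)$ on which $\|z\| = G\big(\langle\psi_1,z\rangle,\dots,\langle\psi_n,z\rangle\big)$ for some $C^\infty$-smooth $G\colon\R^n\to\R$ and $\psi_1,\dots,\psi_n\in\Y^*$. Since $\Y$ is dense in $\X$, one has $\Y^*=\X^*$, so the right-hand side already makes sense on all of $\X$; and since $\Bo(y,\delta_y)$ is dense in $\B_\X^{\mathsf o}(y,\delta_y)$, the formula propagates by continuity to the $\X$-ball. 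Thus the continuous extension of $\n$ to $\X$ is $C^\infty$-smooth and LFC on the dense open set $\U=\bigcup_{y\in\Y\setminus\{0\}}\B_\X^{\mathsf o}(y,\delta_y)$, and density of the approximating norms is inherited. Nothing about the slices, the sets $\P_F$, or the compatibility condition is re-examined.

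Your approach, by contrast, proposes to rerun the entire inductive construction of Steps~\ref{Step: slices}--\ref{Step: smooth norm} with the slices and tubular neighbourhoods taken inside $\B_\X$ rather than $\B_\Y$. This can be made to work, but it requires re-checking all the estimates in $\X$ and, as you note, a pointwise version of the Implicit Function Theorem since the set $\mathcal{O}$ is not convex. There is also a gap you do not close: you write that Fact~\ref{Fact: small slice} ``still gives small slices of $\B_\Y$'', but the compatibility argument in $\X$ needs the slices of $\B_\X$ to be small (since the point $x\in\P_F\cap T(F,\theta_F/2)$ may lie in $\X\setminus\Y$). The missing step is that $S(\psi,\delta)\cap\B_\Y$ is dense in $S(\psi,\delta)\cap\B_\X$ (when $\B_\Y$ is the ball of the LUR norm and $\B_\X$ is that of its continuous extension), so the two slices have the same diameter; this is straightforward but needs to be said. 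In short: your plan reconstructs the convex body from scratch in $\X$, whereas the paper simply reads off the extension from the abstract LFC factorisation, which is both cleaner and makes the density assertion immediate.
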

\begin{proof} Let $\{e_\alpha; \p_\alpha\}_{\alpha \in\Gamma}$ be a fundamental biorthogonal system in $\X$ and let $\Y\coloneqq \spn\{e_\alpha\}_{\alpha\in\Gamma}$. According to Theorem \ref{Th A}(\ref{Th A: smooth}), we can take a $C^\infty$-smooth and LFC norm $\n$ on $\Y$. Then for every $y\in\Y$, $y\neq 0$, there are $\delta_y>0$, functionals $\psi_1,\dots,\psi_n\in \X^*$, and a $C^\infty$-smooth function $G\colon \R^n\to\R$ such that
\begin{equation}\label{eq: smooth locally factorise}
    \|z\|= G\big( \langle\psi_1,z\rangle,\dots, \langle\psi_n,z\rangle \big) \qquad \text{for every } z\in \Bo(y,\delta_y),
\end{equation}
where $\Bo(y,\delta_y):=\{z\in \Y\colon \|y-z\|<\delta_y \}$ (see, \emph{e.g.}, \cite[Fact 5.79]{HJ}). Since obviously $\Bo(y,\delta_y)$ is dense in $\B_\X^{\mathsf{o}}(y,\delta_y)$, the formula (\ref{eq: smooth locally factorise}) is actually valid for every $z\in \B_\X^{\mathsf{o}}(y,\delta_y)$. Consequently, setting $\U:=\cup_{y\in\Y\setminus\{0\}} \B_\X^{\mathsf{o}}(y,\delta_y)$, $\U$ is a dense open set in $\X$, and $\n$ is $C^\infty$-smooth and LFC in $\U$. Finally, $C^\infty$-smooth and LFC norms are dense in $\Y$, so we are done.
\end{proof}

\section{Partitions of unity}\label{Sec: partitions}
In this short section we explain how Theorem \ref{Th A}(\ref{Th A: partitions of unity}) follows from \cite{Haydon} and we begin by recalling Haydon's result from \cite{Haydon} that we need (also see \cite[Theorem 7.53]{HJ}). We denote by $\{e^*_\gamma\}_{\gamma\in\Gamma}$ the coordinate functionals on $c_0(\Gamma)$ and we refer to \cite[\S 7.5]{HJ} for basic definitions concerning partitions of unity.
\begin{theorem}[Haydon, \cite{Haydon}]\label{Th: Haydon} Let $\Y$ be a normed space with a $C^k$-smooth bump function. Assume that:
\begin{romanenumerate}
    \item there is a continuous function $\Phi\colon \Y\to c_0(\Gamma)$ such that $e_\gamma^*\circ\Phi$ is $C^k$-smooth where non-zero, for every $\gamma\in\Gamma$,
    \item for every finite set $F\subseteq \Gamma$ there is a $C^k$-smooth map $P_F\colon \Y\to\Y$ such that $\spn (P_F(\Y))$ has locally finite $C^k$-smooth partitions of unity,
    \item for every $x\in\Y$ and $\e>0$ there is $\delta>0$ such that $\|x-P_F (x)\|<\e$, where $F\coloneqq\{\gamma\in\Gamma\colon |\Phi(x)(\gamma)|\geq\delta\}$.
\end{romanenumerate}
Then $\Y$ admits locally finite and $\sigma$-uniformly discrete $C^k$-smooth partitions of unity.
\end{theorem}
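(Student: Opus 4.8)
Strictly speaking this statement is quoted from \cite{Haydon} (see also \cite[Theorem~7.53]{HJ}), so in the paper it will be invoked as a black box rather than reproved; still, let me indicate the strategy one would follow. The goal is: given an arbitrary open cover $\mathcal{W}$ of $\Y$, produce a $C^k$-smooth partition of unity subordinated to $\mathcal{W}$ that is simultaneously locally finite and $\sigma$-uniformly discrete. The plan is to decouple the construction into a ``$c_0(\Gamma)$-direction'', handled by $\Phi$, and a ``model-space direction'', handled by the maps $P_F$, and then to recombine them by multiplication and normalisation, the smoothing of the normalisation being where the $C^k$-smooth bump hypothesis enters.

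First I would fix, on $c_0(\Gamma)$, a canonical locally finite, $\sigma$-uniformly discrete $C^\infty$-smooth partition of unity whose members have \emph{thin} supports: each member depends on finitely many coordinates and is supported on a set on which all remaining coordinates are small. This is the standard ``sup''-type partition of unity of $c_0(\Gamma)$, obtained by composing a fixed smooth bump with $e^*_\gamma(\cdot)-q$ for $\gamma\in\Gamma$ and dyadic $q$, then combining these products over finite sets of coordinates so that the supports tile small boxes and local finiteness follows from the fact that at each point only finitely many coordinates are large. Pulling this family back through $\Phi$ and using that each $e^*_\gamma\circ\Phi$ is $C^k$-smooth where non-zero, one gets a locally finite, $\sigma$-uniformly discrete family $(\psi_j)_j$ of non-negative $C^k$-smooth functions on $\Y$ with $\sum_j\psi_j\equiv 1$, such that on $\supp\psi_j$ the vector $\Phi(x)$ stays in a prescribed small box. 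One then arranges, shrinking the boxes and using hypothesis (iii), that on $\supp\psi_j$ there is a finite $F_j\subseteq\Gamma$ with $\|x-P_{F_j}(x)\|<\e_j$ for all $x\in\supp\psi_j$, with $\e_j$ as small as we please.

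Next, on each $\supp\psi_j$ I would refine using the model space $Z_j\coloneqq\spn(P_{F_j}(\Y))$. Since $P_{F_j}$ is $C^k$-smooth and $\e_j$-close to the identity there, the cover of $Z_j$ obtained by intersecting $P_{F_j}(\supp\psi_j)$ with the $\e_j$-shrinkings of the members of $\mathcal{W}$ can be refined, by hypothesis (ii), to a locally finite $C^k$-smooth partition of unity on $Z_j$; composing with $P_{F_j}$ and multiplying by $\psi_j$ yields a locally finite family of $C^k$-smooth functions whose supports, by the $\e_j$-closeness of $x$ and $P_{F_j}(x)$, lie inside members of $\mathcal{W}$. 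Taking the union over $j$ one checks local finiteness (inherited from that of $(\psi_j)_j$ and of the pulled-back partitions) and $\sigma$-uniform discreteness (index so that the discrete layers of $(\psi_j)_j$ and of the model partitions are multiplied respecting the $\sigma$-decomposition), and finally divides by the locally finite, hence $C^k$-smooth and everywhere positive, total sum. I expect the genuinely delicate point to be the interface between the two directions: choosing the thin-support family $(\psi_j)_j$ so that a \emph{single} finite set $F_j$ controls $\supp\psi_j$ in the sense dictated by (iii) — this is what reduces the infinite-dimensional problem to the countably many model spaces $Z_j$ — and then propagating both local finiteness and $\sigma$-uniform discreteness through the two-step refinement and the normalisation, the smoothness bookkeeping (everything being \emph{locally} a finite sum or product) being routine but needing care at the boundaries of the supports.
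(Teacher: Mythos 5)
You correctly observe that the paper states this result as a black box, citing \cite{Haydon} and \cite[Theorem 7.53]{HJ}, and does not reprove it; the only ``proof-level'' content the paper adds, in the paragraph following the statement, is how to tweak the argument to obtain the additional LFC property by replacing the partition ring $C^\infty(\mathcal{Z})$ with the ring of $C^\infty$-smooth LFC functions. Your sketch of Haydon's underlying argument is consistent with the actual proof: one starts with the thin-support cover of $c_0(\Gamma)$ by sets on which a fixed finite set $F$ of coordinates is uniformly large and all others uniformly small (the sets $W_{F,q,r}$ and the functions $\p_{F,q,r}$ of \cite[Theorem 7.53]{HJ}), pulls this back through $\Phi$, uses hypothesis (iii) to attach to each piece a single finite $F$ for which $P_F$ is near the identity there, and then composes the model-space partitions of unity on $\spn(P_F(\Y))$ with $P_F$ and recombines. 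Two points that your sketch leaves implicit but that carry real weight in the actual argument: first, $\Phi$ is only continuous and $e_\gamma^*\circ\Phi$ is $C^k$ only where nonzero, so the functions on the $c_0(\Gamma)$ side must be chosen locally constant near any vanishing coordinate for the pull-backs to be $C^k$-smooth (the thresholds $q>r>0$ in $W_{F,q,r}$ do exactly this); second, the joint bookkeeping of local finiteness and $\sigma$-uniform discreteness through the two refinement stages and the final normalisation, which you flag as the delicate point, is handled in \cite{Haydon} and \cite[\S 7.5]{HJ} not ad hoc but through the ``partition ring'' abstraction — and it is precisely this abstraction that the present paper modifies to propagate the LFC condition and deduce Theorem \ref{Th A}(\ref{Th A: partitions of unity}).
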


In the case when $\Y\coloneqq\spn\{e_\alpha\} _{\alpha\in\Gamma}$ is the linear span of some fundamental biorthogonal system $\{e_\alpha;\p_\alpha\}_{\alpha\in \Gamma}$ we can apply Haydon's result as follows. The map $\Phi$ is defined by
\begin{equation*}
    \Phi(x)\coloneqq\left(\left\langle\frac{\p_\alpha}{\|\p_\alpha\|},x\right\rangle\right)_{\alpha\in\Gamma},
\end{equation*}
so that $e^*_\gamma\circ\Phi$ is evidently $C^\infty$-smooth and LFC on $\Y$. The map $P_F$ is just the canonical projection from $\Y$ onto $\spn\{e_\alpha\}_{\alpha\in F}$; surely, the finite-dimensional normed space $\spn\{e_\alpha\}_{\alpha\in F}$ admits locally finite $C^\infty$-smooth and LFC partitions of unity. The approximation condition is also easily satisfied, since for every $x\in\Y$ there is a finite set $F\subseteq\Gamma$ with $x=P_F(x)$. Hence Theorem \ref{Th: Haydon} yields that $\Y$ admits locally finite and $\sigma$-uniformly discrete $C^\infty$-smooth partitions of unity. 

In order to explain why such partitions of unity are also LFC, we follow the proof and notation in \cite[Theorem 7.53]{HJ}. Since the functions $\p_{F,q,r}$ defined there are LFC and $\Phi$ is linear, every map $\p_{F,q,r}\circ\Phi$ is LFC (see, for example, \cite[Fact 5.80]{HJ}). Next, instead of using the partition ring $C^\infty(\mathcal{Z})$, where $\mathcal{Z}$ is a normed space, we replace it with the partition ring of $C^\infty$-smooth LFC functions on $\mathcal{Z}$ (we refer to \cite[Definition 7.47]{HJ} for the definition of partition ring). Since the partition ring of $C^\infty$-smooth LFC functions on $\mathcal{Z}$ is determined locally, in the sense of \cite[Definition 7.48]{HJ}, all the statements in \cite[Lemma 7.49]{HJ} are equivalent. Also notice that the partition ring of $C^\infty$-smooth LFC functions on $\Y$ contains a bump function by Theorem \ref{Th A}(\ref{Th A: bump}). After these remarks, we can return to the argument in \cite[Theorem 7.53]{HJ}. The unique additional difference is that we need to show that each of the sets $\Phi^{-1}(W_{F,q,r})$, $P_F^{-1}(V)$, and $(Id-P_F)^{-1}(U_m)$ is of the form $\{f\neq 0\}$, for some $C^\infty$-smooth and LFC function $f\colon\Y\to \R$. We explain this for sets $P_F^{-1}(V)$, the other two cases being analogous. By assumption $V=\{g\neq0\}$, for some $C^\infty$-smooth and LFC function $g\colon \spn\{e_\alpha\}_{\alpha\in F}\to \R$. Then $P_F^{-1}(V)=\{g\circ P_F\neq 0\}$ and the function $g\circ P_F\colon \Y\to\R$ is $C^\infty$-smooth and LFC (again, the LFC property follows from the linearity of $P_F$ \footnote{This argument does not work in the general setting of Theorem \ref{Th: Haydon}, since there the maps $\Phi$ and $P_F$ are not necessarily linear.}). All the remaining steps of the proof being identical to the argument in \cite{HJ}, we conclude the validity of Theorem \ref{Th A}(\ref{Th A: partitions of unity}).

\section{\texorpdfstring{$C^1$}{C¹}-smooth LUR norms}\label{Sec: C1 LUR}
In this section we discuss the proof of Theorem \ref{Th A}(\ref{Th A: C1 LUR}), thereby concluding the proof of our main result. As we said already, the argument is an adaptation of \cite{HP}, therefore we shall restrict ourselves to defining the desired $C^1$-smooth LUR norm and refer to \cite{HP} for the verification of the various properties. Very roughly speaking, the main idea in \cite{HP} is to smoothen up the formula for the norm from Troyanski's renorming technique. Since LUR renormings typically involve countably many contributions around each point, it is crucial to have uniform Lipschitz estimates in order to obtain $C^1$-smoothness in the limit. Two earlier results, based on the same strategy but substantially less technical, are \cite{MPVZ} and \cite{PWZ} (also see \cite[\S V.1]{DGZ}), where a $C^1$-smooth LUR norm is constructed in every separable Banach space and in $c_0(\Gamma)$, respectively. \smallskip

If $\Y\coloneqq \spn\{e_\alpha\} _{\alpha\in\Gamma}$ is the linear span of a fundamental biorthogonal system $\{e_\alpha;\p_\alpha \}_{\alpha \in\Gamma}$, then, to begin with, we can assume that $\{e_\alpha;\p_\alpha \}_{\alpha \in\Gamma}$ is bounded, thanks to Theorem \ref{Th: fundamental b.s.}(\ref{Item Th: bdd system}). Secondly, in light of Theorem \ref{Th A}(\ref{Th A: smooth}), $C^\infty$-smooth norms are dense in the set of all equivalent norms on $\Y$. Hence Theorem \ref{Th A}(\ref{Th A: C1 LUR}) is a consequence of Theorem \ref{Th: C1 LUR approx} below.

\begin{theorem} \label{Th: C1 LUR approx} Let $\X$ be a Banach space with a bounded fundamental biorthogonal system $\{e_\alpha;\p_\alpha \}_{\alpha \in\Gamma}$. Let $\Y\coloneqq \spn\{e_\alpha\} _{\alpha\in\Gamma}$ and let $\n$ be a $C^1$-smooth norm on $\Y$. Then $\n$ can be approximated by $C^1$-smooth LUR norms.
\end{theorem}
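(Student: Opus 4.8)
The plan is to follow the strategy of Hájek--Procházka \cite{HP} and smoothen the classical Troyanski-type formula for an LUR norm, being careful to obtain uniform Lipschitz control so that a countable sum of smooth contributions remains $C^1$-smooth in the limit. We start from the bounded fundamental biorthogonal system $\{e_\alpha;\p_\alpha\}_{\alpha\in\Gamma}$, normalised so that $\|\p_\alpha\|\leq M$, and from the given $C^1$-smooth norm $\n$ on $\Y$, which we may also assume is already LUR by Theorem \ref{Th: fundamental b.s.}(\ref{Item Th: LUR norm}) (we want \emph{both} smoothness and LUR as input data, the former to do the smoothing and the latter to control the deeper layers of the construction). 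The key structural input is that the linear functionals $\langle\p_\alpha,\cdot\rangle$ separate points of $\Y$ and that every $y\in\Y$ has finite support, so the functions $y\mapsto \sum_{\alpha}a_\alpha\,\theta(\langle\p_\alpha,y\rangle)$ used below are genuinely locally finite sums on $\Y$, hence trivially $C^\infty$-smooth and LFC, with no convergence issues.

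The construction itself is an infinite convex combination. As in \cite{HP}, one builds for each $n\in\N$ and each finite $A\subseteq\Gamma$ an auxiliary convex symmetric function $f_{n,A}$ on $\Y$ that is $C^1$-smooth, with derivative Lipschitz uniformly in $n$ and $A$ on the relevant bounded set, and such that $f_{n,A}$ is large precisely on the vectors whose $\p_\alpha$-coordinates (for $\alpha\in A$) are quantitatively bounded away from a given dyadic level $1/n$; this is the smooth surrogate for the indicator functions appearing in Troyanski's argument. One then sets
\begin{equation*}
    \Psi(y)^2 \coloneqq \|y\|^2 + \sum_{k=1}^\infty 2^{-k}\, \varphi_k(y)^2,
\end{equation*}
where each $\varphi_k$ is a suitable $\n$-Lipschitz, convex, $C^1$-smooth function assembled (via a partition of unity on the dyadic scales, or via an $\sup$/integral over the parameters) from the $f_{n,A}$'s, designed so that $\varphi_k(y_j)\to\varphi_k(y)$ along any sequence with $\|y_j+y\|\to 2\|y\|$ forces convergence of more and more coordinates $\langle\p_\alpha,y_j\rangle\to\langle\p_\alpha,y\rangle$, and ultimately $\|y_j-y\|\to 0$. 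The Minkowski functional of $\{\Psi\leq 1\}$ (after checking $\Psi$ is an equivalent norm squared, via Lemma \ref{Lemma: implicit mu}) is then the desired norm; the $2^{-k}$ weights together with the uniform Lipschitz bounds on the $\varphi_k$ and their derivatives guarantee that the series for $\Psi^2$ converges in $C^1$, so $\Psi^2$, hence $\Psi$, hence the resulting norm, is $C^1$-smooth. Closeness to $\n$ is arranged by scaling all the perturbation terms by a small $\e>0$.

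The main obstacle, and the reason this is genuinely a theorem rather than a routine corollary, is exactly the tension between LUR and $C^1$-smoothness in the non-separable, \emph{incomplete} setting. LUR renormings inherently require, around each point, the simultaneous contribution of countably many ``coordinate-stabilising'' terms, and differentiating a countable sum demands uniform $C^1$-control of the summands — this is the delicate point already in \cite{HP} and one must transcribe their uniform estimates verbatim. A secondary subtlety is that $\Y$ is not complete, so one cannot invoke any variational principle or completeness-based smoothing; fortunately the argument of \cite{HP} is constructive and never uses completeness (the only ``global'' facts needed are the separation of points by the $\p_\alpha$ and the finite-support property of $\Y$, both available here). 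Accordingly the proof reduces to: (i) recording the hypotheses and reductions above; (ii) writing down the functions $f_{n,A}$ and $\varphi_k$ exactly as in \cite{HP} but with $\{\p_\alpha\}$ in place of the biorthogonal functionals used there; (iii) citing the uniform Lipschitz and $C^1$-convergence estimates from \cite{HP} to conclude $C^1$-smoothness of $\Psi$; (iv) running the LUR verification from \cite{HP} line by line, which only uses that $\|y_j+y\|\to 2\|y\|$ propagates through each $\varphi_k$; and (v) invoking Lemma \ref{Lemma: implicit mu} to pass from $\Psi^2$ to the norm and noting the $\e$-scaling gives the approximation. Since nothing in these steps exploits completeness of the ambient space, the adaptation goes through, and we simply refer the reader to \cite{HP} for the computational core.
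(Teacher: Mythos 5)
Your overall strategy---adapt the H\'ajek--Proch\'azka construction of $C^1$-smooth LUR approximants, using the bounded biorthogonal system to supply the coordinate functionals, and exploiting the finite-support structure of $\Y$ to dispose of convergence issues and completeness---is exactly the route the paper takes, and your observations about what makes the argument work in the incomplete setting (separation by the $\p_\alpha$'s, finite support, no appeal to variational principles) are on point.

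However, the opening reduction is wrong and, as written, would break the argument. You claim that by Theorem \ref{Th: fundamental b.s.}(\ref{Item Th: LUR norm}) you ``may also assume [$\n$] is already LUR'' and that you want \emph{both} $C^1$-smoothness and LUR of $\n$ as input data. Replacing $\n$ by the approximating LUR norm given by that theorem destroys the $C^1$-smoothness of $\n$: the Troyanski/Moreno renorming is LUR but in general not even G\^ateaux differentiable, let alone $C^1$. So after your reduction the term $\|y\|^2$ in your formula for $\Psi(y)^2$ need no longer be $C^1$-smooth, and the whole smoothing collapses. In fact, requiring $\n$ to be simultaneously $C^1$-smooth and LUR is essentially the conclusion of the theorem, not a hypothesis one can arrange for free. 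The HP-type construction does not need $\n$ to be LUR at all: the LUR behaviour of the resulting norm is created entirely by the perturbation terms (in the paper, the functions $J_{j,n,m}=\xi_j\circ Z_{\eta_{n,m}}\circ H_{n,m}$ built from the ``strong-maximum'' norms $Z_\eta$ on $\ell_\infty(\Lambda_n)$), and the only role of $\n$ is to furnish the $C^1$-smooth leading term $\|y\|^2$ and the $C^1$-smooth expressions $\big\|y-\sum_{\alpha\in B}\langle\p_\alpha,y\rangle e_\alpha\big\|$ fed into the $g_{n,m,l}$'s. You should drop the LUR assumption on $\n$ entirely; with that correction, the remainder of your sketch (modulo its deliberate vagueness about the precise form of the $\varphi_k$'s, which in the paper are the $Z_{\eta_{n,m}}\circ H_{n,m}$ composed with $\xi_j$, smooth on the sets $A_{\eta_{n,m}}(\Lambda_n)$) matches the paper's proof.
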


Before the proof we need to recall a couple of definitions. Let $\Y$ be a normed space and $\D$ be a convex subset of $\Y$. A convex function $f\colon \D\to \R$ is \emph{strictly convex} if its graph contains no non-trivial segments. Given a set $\Lambda$, a function $g=(g_\gamma)_{\gamma \in\Lambda} \colon \Y\to \ell_\infty (\Lambda)$ is \emph{coordinate-wise convex} (resp.~\emph{coordinate-wise $C^k$-smooth}) if, for every $\gamma\in\Lambda$, the function $g_\gamma$ is convex (resp.~$C^k$-smooth).

\begin{proof}[Proof of Theorem \ref{Th: C1 LUR approx}] Without loss of generality, we can assume that $\|e_\alpha\|=1$ and $\|\p_\alpha\|\leq M$, for some $M\geq 1$ ($M$ corresponds to the constant $2C$ in \cite[\S 4.2]{HP}). We begin by fixing some notation. Let, for every $n\in\N$, $\xi_n\colon [0,\infty)\to [0,\infty)$ be a $C^\infty$-smooth, $1$-Lipschitz convex function such that $\xi_n(t)=0$ when $t\in[0,1/n]$ and $\xi_n(t)=t- \frac{2}{n}$ when $t\geq 3/n$ (take the convolution of $t\mapsto \max\{0,t-\frac{2}{n}\}$ with a suitable bump). Let $\Bo$ be the open unit ball of $\Y$. Given a set $\Lambda$, on $\ell_\infty (\Lambda)$ we consider the seminorm $\ceil\cdot$ defined by
\begin{equation*}
    \ceil z \coloneqq \inf \big\{t>0\colon \{\gamma\in\Lambda \colon |z(\gamma)|>t\} \textrm{ is finite}\big\} = \|q(z)\|_{\ell_\infty(\Lambda) / c_0(\Lambda)},
\end{equation*}
where $q\colon \ell_\infty(\Lambda) \to \ell_\infty(\Lambda) / c_0(\Lambda)$ is the quotient map. For $\eta\in(0,1)$ we consider the set
\begin{equation*}
    A_\eta(\Lambda)\coloneqq \left\{z\in\ell_\infty (\Lambda)\colon \ceil z < (1-\eta) \|z\|_\infty \right\}.
\end{equation*}
Note that $z\in A_\eta(\Lambda)$ is a `strong maximum' condition, in the sense that finitely many coordinates of $z$ are quantitatively larger than the others. The construction of the norm is then performed in three steps. \smallskip

First, consider the system of functions $\left\{g_{n,m,l} \colon n,m,l\in\N, l\leq n \right\}$ on $\R^2$ obtained by shifting and scaling a certain function $g$. Set $g\colon \R^2\to [0,\infty)$ to be $g(t,s)=0$ for $t\leq 0$ and, for $t>0$,
\begin{equation*}
    g(t,s)\coloneqq \exp{(-10/t)} \cdot \left( \frac{s^2}{100} + \frac{s}{10}+1 \right).
\end{equation*}
For $n,m,l\in\N$ with $l\leq n$ define
\begin{equation*}
    g_{n,m,l}(t,s)\coloneqq g\left(\frac{t-l/n}{1+nM}, \theta_{n,m}\frac{s}{1+nM} \right).
\end{equation*}
The main properties of the system $\left\{g_{n,m,l} \right\}$ are listed in \cite[Lemma 4.5]{HP}, where in particular the parameters $\rho_n\in (0,1/2)$, $\theta_{n,m}\in(0,1)$, and $\kappa_{n,m}\in(0,\rho_n)$ are fixed. Let $\eta_{n,m}\coloneqq \rho_n- \kappa_{n,m}\in(0,1/2)$. Consider the set
\begin{equation*}
    \Lambda_n\coloneqq \big\{(A,B)\colon \emptyset\neq B\subseteq A\subseteq \Gamma, |A|\leq n\big\}
\end{equation*}
and the functions $H_{n,m}\colon \Bo\to \ell_\infty(\Lambda_n)$ defined by
\begin{equation*}
    H_{n,m}y(A,B)\coloneqq g_{n,m,|A|}\left(\sum_{\alpha \in A}\xi_n \big(|\langle\p_\alpha, y\rangle|\big), \xi_n \left( \left\|y-\sum_{\alpha\in B} \langle\p_\alpha, y\rangle e_\alpha \right\| \right) \right).
\end{equation*}
It is easily seen that $H_{n,m}$ is $1$-Lipschitz, coordinate-wise convex and coordinate-wise $C^1$-smooth. Additionally, \cite[Lemma 4.7]{HP} asserts that $H_{n,m}y\in A_{\eta_{n,m}}(\Lambda_n)$ for every $y\in \Bo$ with $H_{n,m}y\neq 0$. \smallskip

The second step consists in building the norms for gluing together the functions $H_{n,m}$ in the standard way. For $\eta\in(0,1/2)$, take a $C^\infty$-smooth convex function $\psi_\eta\colon [0,\infty)\to[0,\infty)$ such that $\psi_\eta(t)=0$ for $t\in[0,1-\eta]$, $\psi_\eta$ is strictly convex on $[1-\eta,\infty)$, and $\psi_\eta(1)=1$. We also require that $\psi_{\eta_1}(t) \leq \psi_{\eta_2}(t)$ for $t\in[0,1]$ and $\eta_1< \eta_2$. Set $\Phi_\eta\colon \ell_\infty(\Lambda)\to [0,\infty]$ by
\begin{equation*}
    \Phi_\eta(z)\coloneqq \sum_{\gamma\in \Lambda} \psi_\eta(|z(\gamma)|)
\end{equation*}
and let $Z_\eta$ be the Minkowski functional of the set $\{\Phi_\eta\leq1\}$. One readily sees that $Z_{\eta_1}\leq Z_{\eta_2}$ if $\eta_1\leq\eta_2$, that $Z_\eta$ is a lattice norm, and that $(1-\eta)Z_\eta\leq \n_\infty\leq Z_\eta$. In particular, $Z_\eta$ is $2$-Lipschitz on $\ell_\infty(\Lambda)$. Moreover, $Z_\eta$ is $C^\infty$-smooth and LFC on the set $A_\eta(\Lambda)$ and $(1-\eta)Z_\eta(z) < \|z\|_\infty$ for every $z\in A_\eta(\Lambda)$, \cite[Lemma 4.1]{HP}. Additionally, $Z_\eta$ satisfies a LUR condition for `large coordinates' (namely for those coordinates $\gamma$ for which $z(\gamma)>(1-\gamma) Z_\eta(z)$), \cite[Lemma 4.3 and Lemma 4.4]{HP}. \smallskip

Finally, in the last step we glue all the ingredients together. Consider the norm $Z_\eta$ on $\ell_\infty(\Lambda)$ with $\eta=\eta_{n,m}$ and $\Lambda= \Lambda_n$ as defined above. For $j,n,m\in\N$ define $J_{j,n,m}\colon \Bo\to [0,\infty)$ by
\begin{equation*}
    J_{j,n,m}\coloneqq \xi_j \circ  Z_{\eta_{n,m}} \circ H_{n,m}.
\end{equation*}
Clearly, $J_{j,n,m}$ is $2$-Lipschitz on $\Bo$ and $J_{j,n,m}(0)=0$. Moreover, $Z_{\eta_{n,m}} \circ H_{n,m}$ is $C^1$-smooth on the set $\{H_{n,m} \neq0\}$; this follows from the facts that $H_{n,m}$ is coordinate-wise $C^1$-smooth, that $H_{n,m}y\in A_{\eta_{n,m}}(\Lambda_n)$ when $H_{n,m}y\neq 0$, and that $Z_{\eta_{n,m}}$ is $C^1$-smooth on $A_{\eta_{n,m}}(\Lambda_n)$. Hence, $J_{j,n,m}$ is $C^1$-smooth on $\Bo$. Finally, fix $\e>0$ and define $J\colon \Bo\to[0,\infty)$ by
\begin{equation*}
    J(y)^2\coloneqq \|y\|^2+ \e \sum_{j,n,m\in\N} 2^{-(j+n+m)} J_{j,n,m}(y)^2.
\end{equation*}
Since each $J_{j,n,m}$ is $2$-Lipschitz, the series of the derivatives converges and $J$ is $C^1$-smooth on $\Bo$. Also, $\|y\|\leq J(y)\leq \sqrt{1+4\e}\|y\|$, whence
\begin{equation*}
    \frac{1-\e}{\sqrt{1+4\e}}\B_\Y \subseteq \{J\leq1-\e\}\subseteq \B_\Y.
\end{equation*}
Therefore, the Minkowski functional $\nn\cdot$ of the set $\{J\leq 1-\e\}$ is a norm that approximates $\n$. Moreover, $J$ is $C^1$-smooth on $\Bo\supseteq \{J<1\}$, so Lemma \ref{Lemma: implicit mu} yields that $\nn\cdot$ is $C^1$-smooth. What remains to be proved is that $\nn\cdot$ is LUR, which follows the argument in \cite[Proposition 4.10]{HP}. As a matter of fact the argument is even simpler in our context, since instead of finding a finite set $A$ with 
\begin{equation*}
    \left\|y - \sum_{\gamma\in A} Q_\gamma y \right\|<\e,
\end{equation*}
we can find a finite set $A=\supp (y)$ with $y = \sum_{\alpha\in A} \langle\p_\alpha,y\rangle e_\alpha$ ($Q_\gamma$ in \cite{HP} are projections that correspond to the rank-one projections $\langle\p_\alpha,\cdot\rangle e_\alpha$).
\end{proof}

\subsection{Higher order smoothness and super-reflexivity}
We shall conclude the section observing that the $C^1$-smooth LUR norms that we constructed above can't in general admit any higher order smoothness. This essentially follows from results in \cite{FWZ} (also see \cite[\S 5.2]{HJ}) and we sketch the explanation below. \smallskip

We write that a norm on $\X$ is $C^{1,+}_{\textrm{loc}}$-smooth (resp.~$C^{1,+}$-smooth) if it is differentiable with locally uniformly continuous (resp.~uniformly continuous) derivative on $\uS_\X$. Moreover, we also recall that the \emph{modulus of smoothness} of a norm $\n$ on a Banach space $\X$ is the function $\rho_\X\colon (0,\infty)\to (0,\infty)$ defined by
\begin{equation*}
    \rho_\X(\tau)\coloneqq\sup\left\{\frac{\|x+\tau h\| + \|x-\tau h\|-2}{2}\colon \|x\|=\|h\|=1 \right\}.
\end{equation*}
A Banach space $\X$ is \emph{uniformly smooth} if $\frac{\rho_\X(\tau)}{\tau}\to0$ as $\tau\to 0^+$; it is a classical result that uniformly smooth Banach spaces are super-reflexive.

\begin{theorem}\label{Thm: C2 LUR super-reflexive} Let $\Y$ be a normed space with a $C^{1,+}_{\normalfont\textrm{loc}}$-smooth LUR norm $\n$. Then the completion $\hat{\Y}$ of $\Y$ is super-reflexive.
\end{theorem}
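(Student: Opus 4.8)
The strategy is to reduce the statement to the known fact from \cite{FWZ} that a Banach space admitting simultaneously an LUR norm and a $C^{1,+}_{\mathrm{loc}}$-smooth norm (or rather a single norm with both properties) forces super-reflexivity, and then to observe that passing to the completion $\hat\Y$ preserves both properties of the norm $\n$. The only subtlety is that $\n$ is a priori defined merely on the incomplete space $\Y$, so one must argue that its canonical extension to $\hat\Y$ retains LUR-ness and $C^{1,+}_{\mathrm{loc}}$-smoothness.

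First I would recall that any norm $\n$ on $\Y$ extends uniquely, by uniform continuity, to a norm $\hat{\n}$ on $\hat\Y$, and that $\uS_{\hat\Y}\cap\Y$ is dense in $\uS_{\hat\Y}$ (by normalising points of a dense sequence). Next I would verify that LUR passes to the completion: given $x\in\uS_{\hat\Y}$ and $(x_k)\subseteq\uS_{\hat\Y}$ with $\|x_k+x\|\to2$, one approximates $x$ and each $x_k$ by elements of $\uS_\Y$ within $1/k$, applies the LUR modulus of $\n$ on $\Y$ — more precisely, one uses that LUR is equivalent to the statement that $\|x_k\|^2+\|x\|^2-\tfrac12\|x_k+x\|^2\to0$ implies $x_k\to x$, a condition which is stable under uniform limits of the norm on a dense set — and concludes $x_k\to x$ in $\hat\Y$. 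The $C^{1,+}_{\mathrm{loc}}$-smoothness is inherited similarly: the derivative map $x\mapsto \n'(x)$ is defined and locally uniformly continuous on $\uS_\Y$, hence extends to a locally uniformly continuous map on the dense subset $\uS_{\hat\Y}\cap\Y$ of $\uS_{\hat\Y}$, and therefore to all of $\uS_{\hat\Y}$; a standard argument (the Mean Value inequality together with density) shows this extension is genuinely the Fréchet derivative of $\hat{\n}$, which is thus $C^{1,+}_{\mathrm{loc}}$-smooth.

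Once $\hat\Y$ is a Banach space with a norm that is both LUR and $C^{1,+}_{\mathrm{loc}}$-smooth, I would invoke \cite[Theorem 3.3(ii)]{FWZ} (as flagged in the Introduction): a Banach space with such a norm is super-reflexive. Actually the cleanest route is: a $C^{1,+}_{\mathrm{loc}}$-smooth norm, after an equivalent renorming argument localised near a point of the sphere, yields a norm with modulus of smoothness of power type $2$ on a neighbourhood, hence (by convexity and the LUR duality, or directly by \cite{FWZ}) the space has an equivalent uniformly smooth — indeed power-type-$2$ smooth — norm, and is therefore super-reflexive. I would simply cite \cite{FWZ} for this implication rather than reprove it.

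The main obstacle, and the only place requiring care, is the passage to the completion for the $C^{1,+}_{\mathrm{loc}}$ property: one must check that the pointwise extension of the derivative is indeed the Fréchet derivative of $\hat{\n}$ and that the \emph{local uniform} continuity modulus survives (local uniform continuity on a dense subset of the sphere extends to local uniform continuity on the whole sphere, since the modulus is controlled on balls). The LUR extension and the final citation are routine. I would therefore devote most of the write-up to the extension lemma and keep the super-reflexivity deduction to a one-line appeal to \cite[Theorem 3.3(ii)]{FWZ}.
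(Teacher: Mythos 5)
Your plan correctly identifies the endpoint (reduce to the implication from \cite{FWZ}) and the overall shape (pass from $\Y$ to $\hat{\Y}$), but the order in which you carry out the two steps introduces a genuine gap, and the paper's proof is arranged precisely so as to avoid it.

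The problematic claim is that both LUR and $C^{1,+}_{\mathrm{loc}}$ pass from $\Y$ to $\hat{\Y}$. Neither of these is a uniform condition across the sphere, so neither extends to limit points by a density argument of the kind you sketch. For LUR: the modulus $\delta(x,\e)$ at $x\in\uS_\Y$ has no reason to be bounded away from $0$ as $x$ approaches a point $\hat{x}\in\uS_{\hat\Y}\setminus\Y$; your assertion that the condition $\|x_k\|^2+\|x\|^2-\tfrac12\|x_k+x\|^2\to 0 \Rightarrow x_k\to x$ is ``stable under uniform limits of the norm on a dense set'' is exactly what needs proof and is not justified (it is a pointwise property at $x$, not a uniform one). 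The same issue affects the $C^{1,+}_{\mathrm{loc}}$ step: a locally uniformly continuous map on a dense subset of the sphere need not extend to the whole sphere, because the radius of the neighbourhood and the local modulus near $y\in\uS_\Y$ can degenerate as $y\to\hat{x}$. In short, ``local uniform continuity on a dense subset extends to the closure'' is false; you would need global uniform continuity on a bounded set for that.

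The paper sidesteps this by performing the FWZ step \emph{on} $\Y$, not on $\hat{\Y}$: since $\n$ is LUR on $\Y$, every point of $\uS_\Y$ is strongly exposed, and \cite[Theorem 5.46]{HJ} (the normed-space formulation of \cite[Theorem 3.3(ii)]{FWZ}) then produces a $C^{1,+}$-smooth norm $\nn\cdot$ on $\Y$ --- i.e.\ one whose derivative is \emph{globally} uniformly continuous on $\uS_\Y$. This global modulus $\omega_g$ gives, via the mean value theorem, the two-sided estimate
\begin{equation*}
    \nn{y+\tau h}+\nn{y-\tau h}-2\leq \omega_g(2\tau)\,\tau \qquad (\nn y=\nn h=1,\ \tau<1/2),
\end{equation*}
which is a \emph{uniform} inequality over $\uS_\Y\times\uS_\Y$ and therefore \emph{does} pass to $\hat{\Y}$ by density. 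That shows $\hat{\Y}$ is uniformly smooth, hence super-reflexive. So the correct order is: first trade LUR + local smoothness for global $C^{1,+}$ on $\Y$, then extend the resulting global modulus of smoothness --- not the LUR modulus or the local continuity modulus --- to the completion.
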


\begin{proof} Since $\n$ is LUR, every point of its unit sphere is strongly exposed. Hence \cite[Theorem 5.46]{HJ} (which comes from \cite{FWZ}) implies that $\Y$ has a $C^{1,+}$-smooth norm $\nn\cdot$. Let $g\colon \Y\setminus\{0\}\to\R$ be the derivative of $\nn\cdot$; by homogeneity, $g$ is uniformly continuous on $\{y\in\Y\colon \nn{y}\geq1/2\}$ with modulus of continuity, say $\omega_g$. Fix $y,h\in\Y$ with $\nn{y}= \nn{h}=1$ and $\tau\in (0,1/2)$. By Lagrange's theorem, there are $\theta^\pm\in(0,1)$ with
\begin{equation*}
    \nn{y\pm \tau h}- \nn{y}= \langle g(y\pm \theta^\pm \tau h), \pm\tau h\rangle   
\end{equation*}
(here we are using the facts that $\nn\cdot$ is differentiable on $\Y$ and $y,h\in\Y$). Note that $\nn{y\pm \theta^\pm \tau h}\geq1/2$. Then we have
\begin{equation*}\begin{split}
    \nn{y+\tau h}+ \nn{y-\tau h} -2\nn{y}&= \left\langle g(y+ \theta^+ \tau h)- g(y- \theta^- \tau h),\tau h \right\rangle\\
    &\leq \nn{g(y+ \theta^+ \tau h)- g(y- \theta^- \tau h)}\cdot\tau \\
    &\leq \omega_g\left(\nn{\theta^+ \tau h+ \theta^- \tau h} \right)\cdot \tau\leq \omega_g(2\tau)\cdot \tau.
\end{split}\end{equation*}
Since $\Y$ is dense in $\hat{\Y}$, the previous inequality is also valid for every $y,h\in\hat{\Y}$ with $\nn{y}= \nn{h}=1$. Hence, (dividing by $2\tau$ and) passing to the supremum over such $y,h$ yields
\begin{equation*}
    \frac{\rho_{\hat{\Y}}(\tau)}{\tau}\leq \frac{\omega_g(2\tau)}{2}\to 0, \qquad\text{as } \tau\to0^+.
\end{equation*}
Consequently, $(\hat{\Y},\nn\cdot)$ is uniformly smooth, as desired\footnote{This argument showing that $C^{1,+}$-smooth norm are uniformly smooth certainly is a classical one. We presented it just because of the passage to the completion that we needed in the midst of it.}.
\end{proof}

\begin{remark} In particular, no dense subspace $\Y$ of $c_0(\Gamma)$ admits a $C^{1,+}_{\textrm{loc}}$-smooth LUR norm. This particular case could also be proved by combining \cite[Theorem 5.46]{HJ} with \cite{W}. More precisely, a standard `small perturbations' argument shows that it is sufficient to consider $\Gamma=\N$ and $\Y=c_{00}$ (see, \emph{e.g.}, \cite[Theorem 2.1]{HR densely}). Then \cite[Theorem 5.46]{HJ} would imply that $c_{00}$ has a $C^{1,+}$-smooth norm, which is false, \cite{W}.
\end{remark}

\medskip{}
\textbf{Acknowledgements.} We are indebted to Simone Ferrari for his helpful comments and references concerning Remark \ref{Rmk: denting LUR} and to Vicente Montesinos for his remarks on \cite{HM}. Finally, we wish to thank the anonymous referees for their detailed and helpful report.


\end{document}